\newtheorem {theorem}{Theorem} 
\newtheorem {result}{Result} \newtheorem{proposition}[theorem]{Proposition} 
\newtheorem {definition}{Definition} 
\newtheorem* {corollary1}{Corollary 1.1} 
\newtheorem {corollary}{Corollary}[theorem] 
\newtheorem {lemma}{Lemma}[theorem] \newtheorem*{lemma*}{Lemma} \newtheorem*{tool*}{Lemma} 
\newtheorem {conjecture}{Conjecture}  \newtheorem*{remark*}{Remark}
\newtheorem {remark}{Remark} \newtheorem {example}{Example} 
\newtheorem {hyp}{Hypothesis}
\newcommand{\bthrm}{\begin{theorem}} \newcommand{\ethrm}{\end{theorem}}
\newcommand{\bdf  }{\begin{definition}} \newcommand{\edf}{\end{definition}}
\newcommand{\bcor }{\begin{corollary}} \newcommand{\ecor}{\end{corollary}}
\newcommand{\bres }{\begin{result}} \newcommand{\eres}{\end{result}}
\newcommand{\blem }{\begin{lemma}} \newcommand{\elem}{\end{lemma}}
\newcommand{\bconj}{\begin{conjecture}} \newcommand{\econj}{\end{conjecture}}
\newcommand{\brem }{\begin{remark}} \newcommand{\erem}{\end{remark}}
\newcommand{\bex  }{\begin{example}} \newcommand{\eex}{\end{example}}
\newcommand{\beq  }{\begin{equation}} \newcommand{\eeq}{\end{equation}}
\newcommand{\bea  }{\begin{eqnarray}} \newcommand{\eea}{\end{eqnarray}}
\newcommand{\beaw  }{\begin{eqnarray}} \newcommand{\eeaw}{\end{eqnarray}}
\newcommand{\ben  }{\begin{enumerate}} \newcommand{\een}{\end{enumerate}}
\newcommand{\bpf  }{\begin{proof}} \newcommand{\epf}{\end{proof}}
\newcommand{\eqref}[1]{(\ref{#1})}
\def\ln {{\mbox{ln}}}  
 \def\cal{\mathcal}
  \def\E {{\mathbb E}} \def\F {{\mathcal F}} 
  \def\P {{\mathbb P}}  
\def\R {{\mathbb R}} \def\Rp {{\mathbb R_{+}}}
  \def\CC {{\mathcal C}}  
 \def\G {{\mathcal G}}    \def\MM
\def\MM{{\mathcal M}} 
\def\X {{X }}
\def\ss {{\subset}} \def\-{^{-1}}  \def\=> {{\ \Rightarrow \ }} \def\<=> {{\
\Leftrightarrow \ }}
\def\tri {{\triangle}} 	
\def\vsi {{\varsigma}}
\def\blambda{\bar{\lambda}}  \def\bnu{\bar{\nu}}
\def\tlambda{\widetilde{\lambda}} \def\tilth{\widetilde{h}} \def\tiltg{\widetilde{g}} 
\def\tmu{\widetilde{\mu}}  \def\tS{\widetilde{S}} \def\tg{\widetilde{g}}
\begin{document} 

\begin{frontmatter}
\title{On Stability of Hawkes Process
}
\runtitle{On Stability of Hawkes Process}

\begin{aug}
\author{\fnms{Dmytro} \snm{Karabash}\thanksref{t11}
	\ead[label=a11]{karabash@cims.nyu.edu}\ead[label=a12]{dmytro.karabash@gmail.com}}
\thankstext{t11}{Partially supported by NSF grants:DMS-0904701, DMS-1208334 and DARPA.}
\affiliation{Courant Institute of Mathematical Sciences, New York University}

\address{Dmytro Karabash\\
         Courant Institute of Mathematical Sciences\\ 
	 New York University\\
	 251 Mercer St\\
	 New York, NY, 10012\\
	 \printead{a11}\\\phantom{E-mail:\ }\printead*{a12}}

\end{aug}

\begin{abstract}
  Existence and stability properties are studied for Hawkes process, i.e.  point process $S$ that has long-memory and intensity   
  $r(t)=\lambda \big(g_0(t)+ \sum_{\tau<t, \tau \in S} h(t-\tau) \big)$.  The  approach to Hawkes process presented in
  this paper allows us to prove the uniqueness of invariant distribution of the process under weaker conditions.  New speed
  of convergence results are also shown.  Unlike previous results the function $\lambda$ is  not required to be
  Lipschitz and   can be  even discontinuous. Some  generalizations are also considered.
\end{abstract}

\begin{keyword}[class=AMS]
\kwd[Primary ] {60B10}
\kwd{60G10} 
\kwd{60G52} 
\kwd{60G55} 
\kwd{60J75} 
\kwd{60J80} 
\kwd{92D25}
\kwd[; secondary ] {47D07} 
\kwd{60E15} 
\kwd{60F05} 
\kwd{60J85} 
\kwd{60K15}.
\end{keyword}

\begin{keyword}
\kwd{point processes}
\kwd{self-exciting processes}
\kwd{Hawkes processes}
\kwd{stability}
\kwd{convergence}
\kwd{equilibrium}
\kwd{invariant distribution}
\kwd{speed of convergence}
\kwd{multi-type}
\end{keyword}

\end{frontmatter}

\section{Introduction}
\subsection{Definition}\hspace{0pt} 
Hawkes Process is a
time-homogeneous self-exciting locally-finite point process 
on $\R$ with long-memory.  A realization of the process is a random locally-finite subset $S=S(\omega)$ of $\R$.
Any locally-finite point process is characterized by its intensity rate $r(t,\omega)$ defined as the intensity
of the point process at time $t$ conditioned on the past history of the process until time $t$, i.e. $r(t,\omega)$ can
be defined as 
\begin{equation}	
  r(t,\omega):=\lim_{\delta t \to 0} \frac{\P\bigg[\# \big( S(\omega)\cap [t,t+\delta t) \big) \geq 1 | {\cal
  F}_t\bigg]}{\delta t}	
\end{equation}
where $\# \big( S(\omega)\cap [t,t+\delta t) \big)$ is the number of elements of the set $S(\omega)\cap
[t,t+\delta t)$ and ${\cal F}_t$ is the $\sigma-$field for $S_t(\omega):=S(\omega) \cap (-\infty,t)$ generated by
{\it elementary} events
$\{\omega: S_t(\omega)\cap I \neq \emptyset \}$
where $I$ varies over all intervals $I\subset (-\infty, t)$.

It is convenient to view the process as a random subset of $\Rp:=[0,\infty)$ 
and thus the  intensity  function $r(t,\omega)$ is defined for $t \in \Rp$. 
Hawkes Process is defined by three $\Rp$-valued functions on $\Rp$.  
Given these three functions $\lambda,h,g_0$ the
intensity rate is given by
  \begin{equation} \label{def:sigma} r(t,\omega)=\lambda \big(g_0(t)+\sum_{\tau \in S_t} h(t-\tau) \big) \end{equation}
where $g_0$ is some initial condition.

The above description is equivalent to the one  found in literature except for the term $g_0$.  
Usually instead of starting from initial condition $g_0$, the process is 
defined by giving $S_0$ a locally finite collection of points in $(-\infty,0)$. This set $S_0$ represents points of
process before time $0$. To connect to our setting $g_0$ can then be computed as 
  \beq g_0(t)=\sum_{\tau \in S_0} h(t-\tau) \label{eq:old_form} \eeq
and the the two definitions are equivalent.  Defining process with $g_0$ as initial conditions also allow for
more general functions which are not of form \eqref{eq:old_form}.

This approach, besides providing this slight generalization of the model,
leads to a somewhat different perspective: Hawkes
process is a solution of a stochastic partial differential equation with jumps.  
In fact Hawkes process corresponds to solution of one of the simplest such equations where
the state of the process at time $t$ is the random \textit {impulse function} $g_t(\cdot)$ that evolves by translation
in time $g_{t+\delta t}(\cdot)=g_t(\cdot +\delta t)$ with  jumps $g_t(\cdot)\to g_t(\cdot)+h(\cdot)$ occurring at rate
$\lambda(g_t(0))$.
There is a built-in invariance under time translation due to the form of \eqref{def:sigma}.
The evolution of the \textit{impulse function}  $g_t:\Rp \to \Rp$ is defined by:
  \begin{equation} g_t(s)=g_0(t+s)+\sum_{\tau\in S_t} h(t+s-\tau) \end{equation} 
It is  $\F_t$-measurable and is a Markov process (it need not contain all the past information as is the case  when
$h(t)=e^{-t}$). 
Given two times $s \ge t \ge 0$, the conditional distribution of the next $\tau \in S$ after time $t$
being greater than $s$ is given by
\begin{eqnarray} 
  \P[\tau \ge s|{\cal F}_t]&:=&\P\big[ S \cap [s,t)=\emptyset, S \cap [t,\infty) \neq \emptyset | \F_t \big]\\
  &=&\exp \left[-\int_0^{s-t} \lambda(g_t(s))ds \right] 
\end{eqnarray}
at which point $g_\tau(\cdot)$ jumps to $g_\tau(\cdot)+h(\cdot)$. Thus $g_t$ is by  itself a Markov process.

\subsection{Results}
The  two  main results of this paper, Theorems \ref{thrm:modulus} and \ref{thrm:jumps},  generalize the results of 
Br\'{e}maud-Massouli\'{e} \cite{Bremaud} in different directions and prove that under certain conditions on
$\lambda$ and $h$, for a certain wide class $\mathcal C$ of initial conditions $g_0$, the distributions 
of $g_t(\cdot)$ converge to a common limit.  Furthermore the limiting distribution is supported on $\mathcal C$. The
secondary new result, Theorem \ref{thrm:speed0}, is on speed of convergence. 

In addition to these theorems we also observe several facts well known for attractive systems. These are
collected in Proposition \ref{prop:convergence}. The results are stated in sections \ref{sec:existence} and \ref{sec:uniqueness}.

\subsection {History and references to related work}
Point processes were first studied in \cite{Erlang} by Erlang in connection with queueing theory.  
Hawkes process were first introduced in \cite{Hawkes} to study
self-exciting point processes.   See \cite{Cox,Liniger} for additional references.  
The current work is the first one that covers cases where  $\lambda$  need not be Lipschitz and in fact can even have
jumps.  Large deviations  questions for  wide classes of   $\lambda$ 
and  $h$  have been studied by Zhu  in  \cite{Zhu1} and \cite{Zhu2};  for the special case of  linear $\lambda$ 
explicit  large deviation rates  and other limit theorems have been derived in  \cite{KZ}.

Currently Hawkes processes are used to model many phenomena ranging from queues and population growth
to mutations and  spread of infections; to defaults and jumps in financial markets; to neuroactivity and
social-networks; and finally even to modeling of artificial intelligence and creative thinking.  
While most of the real world applications involve use of multi-dimensional Hawkes process, in order to keep the 
presentation simple,  we limit ourselves to
the one-dimensional version. Possible generalizations  are outlined in section \ref{sec:gen}.

\subsection{Example} \label{ssec:example} 
Simple example comes from population growth. 
Consider a population that grows either by immigration or by birth generated by the current population. Immigrants are
treated as newborns when they arrive.   The immigration rate is $A$ and the birth rate for each individual is
$h(s)$, which depends only on  the current age $s$ of the individual. Then total growth rate of the population at time
$t$ is then given by 
  \beq r(t)=A+\sum_{\tau \in S, \tau<t} h(t-\tau)\eeq  or if we normalize with $h$ by 
$\int_0^\infty h(s) ds=1$, we replace this by
  \beq r(t)=A+B \sum_{\tau \in S, \tau<t} h(t-\tau)\eeq
since $B$ comes out of normalization.  So we see that this is a Hawkes process with $\lambda(z)=A+Bz$.  The Hawkes
processes with such linear $\lambda$ appear very often and are related to Galton-Watson trees which in turn provides an
easy way of studying many properties of Hawkes processes.

\vskip 0pt
\section{Formal Definitions} \label{sec:def}
\subsection{Formal Definition of a Hawkes Process}  \label{ssec:def}
\begin{definition} \label{P}
  \it{Hawkes process} is a random collection $S$ of
  points in $\Rp:=[0,\infty)$ characterized by triplet of functions $(\lambda,h,g_0)$ where the functions $\lambda,
  h,g_0: [0,\infty) \to [0,\infty)$. The conditional Poisson intensity at time $t$ is
  \beq r_t:=\lambda \left( g_0(t)+\sum_{0\le \tau<t, \tau \in S} h(t-\tau) \right) \eeq where the sum is over all
  previous points $\tau$ in $S_t:=S \cap [0,t)$. The function $g_0$ describes  the initial condition and the functions
  $\lambda$ and $h$ describe the evolution of the process.  We denote this Hawkes process by quadruple
  $(\P,\lambda,h,g)$.
\end{definition}
We will define a coupling of these different measures $\P_g$ by a canonical construction in the subsection
\ref{ssec:canonical}.

It is convenient to take as  state space  the space $\X$ of locally integrable $\Rp$-valued functions on $\Rp$.
Let $\MM(X)$ be the space of probability distributions on $\X$.
Then the process can be realized as an $\X$-valued process.
Start at time $t$ from with initial condition $g_t \in \X$ and consider the random evolution determined by two
components: deterministic
flow  $g_{t+\delta t}(s)=g_0(t+\delta t+s)$ up to stopping time $\tau$ at which point $g_\tau$ jumps  
$g_{\tau+0}(x)=g_{\tau-0}(x)+h(x)$
with the
distribution of $\tau=\tau(g)$ given by
\begin{equation}
  \P_g[\tau \ge t+\delta t]=\exp\left[ -\int_0^{\delta t} \lambda(g_t(s)) ds \right].
\end{equation}
After time $\tau$, the process restarts in the sense that same procedure is to be repeated with $\tau$ as a new
starting time and $g_{\tau}$ as a new initial condition.
Each time stopping corresponds to a point in $S$.   The generator of the semi-group $T_t$ of the Markov process acting
on functions $F: \X \to \Rp$  is given by:
  \beq \mathcal A:=\mathcal D+\lambda(g(0))(\theta_h-\mbox{Id}) \label{eq:generator}\eeq
where $\theta_h$ is a shift operator defined by $(\theta_h)F(g)=F(g+h)$ and $\mathcal D$ is the  derivative of
push-forward of time evolution defined by
  \beq \mathcal D F(g):=\lim_{\epsilon \to 0} \frac{F(\sigma_{\epsilon} g)-F(g)}{\epsilon} \eeq
where $\sigma_{\epsilon}$ is the time-shift operator defined by $\sigma_{\epsilon} g(s)=g(s+\epsilon)$.
Then this $\X$-valued process $g_t$ satisfies  
\begin{equation} \label{eq:g_t(s)}
  g_t(s)=g_0(t+s)+\sum_{\tau \in S_t} h(t+s-\tau).
\end{equation}

For any initial condition $g_0(\cdot)$, we have a  Markov
process $g_t(\cdot)$. 
Then $\lambda(z_t)$ will be the intensity of the point process, where
  \begin{equation} z_t:=g_t(0)=g_0(t)+\sum_{\tau \in S_t} h(\tau -t). \end{equation}  

It is sometimes more convenient to consider instead of $S_t$ the process 
  \beq Q_t=(N_t,g_t) \eeq
where $N_t=\#(S_t)$ is the number of jumps from time $0$ to time $t$.  Then $Q_t$
can be viewed as a point process $N_t$  driven by the  Markov process  $g_t$. 

\begin{remark}
  Without loss of generality we can assume $\int_0^{\infty} h(t) dt=1.$
  Indeed one can always achieve this if $\|h\|_1=\int h(t) dt<\infty$ by observing that triples
  $\big(\lambda(z),h(t),g(t)\big)$ and $\left(\lambda(z \|h\|_1),\frac{h(t)}{\|h\|_1}, \frac{g(t)}{\|h\|_1}\right)$
  produce the same Hawkes process. On the other hand if $\| h \|_1=\infty$ and $\inf_{z \in \Rp} \lambda(z)>0$ 
 then $\lim_{t \to \infty} g_t(0)=\infty$ a.s. and hence there can be  no  stationary version of the process. 
\end{remark}

\subsection{Convergence notions} We equip the space $X$ with $L_1^{loc}$ metric
\beq \forall g,f \in X, \qquad d_X(g,f)=\sum_{i=1} \frac 1{2^n} \cdot \frac{\int_0^{n} |g(s)-f(s)|ds}{1+\int_0^{n} |g(s)-f(s)|ds} \eeq
The space $\MM(X)$  will then have the topology of  weak convergence  inherited from the metric space $X$.
Let $D(X)$ be the  space of $X$-valued functions on $[0,\infty)$ equipped with Skorohod $J_1$-metric $d$ 
and let $\G$ be the $\sigma$-field  generated by the 
open sets in $(\X, d)$. Collection $\{\G_t\}$ is the corresponding filtration.
Let $g^*$ be the non-initial part of impulse function:
\begin{equation} \label{eq:g_t^*(s)}
  g_t^*(s)=\sum_{\tau \in S_t} h(t+s-\tau)
\end{equation}
and 	
\begin{eqnarray}
  \forall g, \tg \in D(X), \qquad d^*(g,\tg):=d(\mathcal S(g),\mathcal S(\tg))=d(g^*,\tg^*) 
\end{eqnarray}
be the pull-back of $d$ under the map $\mathcal S:D(X)\to D(X)$, $\mathcal S: g \mapsto g^*$ defined according to \eqref{eq:g_t^*(s)} as
  \beq \big(\mathcal S(g)\big)_t(s)=g_t(s)-g_0(t+s) \eeq

Let $\mu_{f,s}$ be the distribution of $g_s$---impulse-function at time $s$---starting from initial condition 
$f$.

The distribution of the whole process $g$ starting with initial condition $g_0$
 will be denoted by $\P_{g_0}$ and the associated expectations  will be denoted by $\E_{g_0}$.  Any $\mu \in \MM(X)$
can be viewed as a random initial condition and
  \beq \P_{\mu}:=\int \P_f \mu(df), \quad \E_{\mu}:=\int \E_f \mu(df), \eeq
denote the corresponding probability measure and expectation with respect to it.
Similarly $\P^*$ and $\E^*$ will be used as analogous expressions for the $g^*$.


In Theorems \ref{thrm:modulus} and \ref{thrm:jumps} we consider total variation $d_{TV}$
of the difference of $\P^*$ starting from two different initial conditions which can also be viewed as total variation of
the corresponding $\P$ but under a different $\sigma$-field $\G^*=\G \circ \mathcal S^{-1}$.  

Furthermore let us  define $d_{TV,T}$ to be the  total variation after time $T$, i.e. the  total variation on the  sigma-algebra
$\G \circ \big(S|_{[T,\infty)} \big)^{-1}$ where operator $(S|_{[T,\infty})$ is defined by:
  \beq \big(\mathcal S|_{[T,\infty)}(g)\big)_t(s)=1_{t>T} \big(g_t(s)-g_0(t+s)\big) \eeq

\section{Tools} \label{sec:tools}
\subsection{Canonical Construction} \label{ssec:canonical}
One way to construct point processes is to start from a  canonical  Poisson point process $\P$.  To any measure $m$ one
can associate a Poisson  point process such  that for  any measurable set $A$, the  number of points in $A$ will be
random variable with  a Poisson distribution with mean  $m(A)$. In our case canonical  Poisson point process  will
correspond to the choice $m(A)=|A|$, the Lebesgue measure on $\Rp \times \Rp$.   Given $\lambda,h,g$ measure $\P$
induces a measure $\P_{g}^{\lambda,h}$ consistent with the definition above in the following way. If $S$ is our point
process then $\tau\in S$
if there is a point on the vertical line  ${\tau} \times [0,\lambda(z_t))$ of our plane $\Rp \times \Rp$. In the rest of
the paper we will denote $\P_g^{\lambda,h}$ by $\P_g$  because  $\lambda$ and $h$ would be the same unless stated
otherwise.

This induces a natural coupling of the collection of the measures $(\P_g)$ indexed by initial conditions $g$ that we
will call {\it canonical coupling}.  This coupling is no way unique but this particular coupling is maximal  when 
$\lambda$ is  non-decreasing and we have two different initial conditions  one of which is strictly larger than the
other (by point-wise partial ordering).  The coupling is maximal in the sense that   the set of points common to both 
of these two processes will  stochastically dominate the similar set of points in  any other coupling.

\subsection{Coupling / Stochastic Domination}
Here we present the canonical  coupling that we will  use. 
\begin{tool*}[Stochastic Domination]
  Suppose $(S,\lambda,h,g_0)$ and $(\tS, \tlambda,\tilth,\tiltg_0)$ are two Hawkes processes satisfying 
  \beq \forall x \in \Rp, (h(x) \leq \tilth(x), g(x) \leq \tiltg(x)) \label{cond:order}\eeq 
  \beq \forall x \leq y, \lambda(x) \leq \tlambda(y) \label{cond:midlambda}\eeq
  Then there exists a coupling such that  $S \ss \tS$.\\
  Note: $h, \tilth$ are not necessarily normalized.
\end{tool*}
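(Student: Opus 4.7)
The plan is to build both processes on the same probability space from a common canonical Poisson point process $\pi$ on $\Rp \times \Rp$ (with Lebesgue intensity), exactly as in Section~\ref{ssec:canonical}, and to show by induction on the points of $\pi$ that the inclusion $S\subset \tS$ is forced by the three hypotheses. Concretely, I would define $\tau \in S$ iff the corresponding Poisson point $(\tau,u)$ satisfies $u < \lambda(z_{\tau-})$ where $z_{\tau-} = g_{\tau-}(0)$ depends only on $S \cap [0,\tau)$, and analogously $\tau \in \tS$ iff $u < \tlambda(\tiltz_{\tau-})$ with $\tiltz_{\tau-} = \tiltg_{\tau-}(0)$ depending on $\tS\cap [0,\tau)$. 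Since the canonical Poisson process is locally finite, on any bounded interval we may enumerate its points in increasing order, and the inductive construction of both $S$ and $\tS$ is unambiguous.

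The core invariant I would propagate along this enumeration is the pointwise inequality
\beq
  \forall s \ge 0,\qquad g_t(s) \le \tiltg_t(s),
\eeq
which in particular gives $z_t \le \tiltz_t$. At time $t=0$ this is just the assumed $g_0 \le \tiltg_0$. Between Poisson points both impulse functions evolve by the deterministic translation $\sigma_\eps$, so the inequality is preserved. At a Poisson point $(\tau,u)$, one of three things can happen: (i) $u \ge \tlambda(\tiltz_{\tau-})$, in which case neither process jumps; (ii) $\lambda(z_{\tau-}) \le u < \tlambda(\tiltz_{\tau-})$, in which case only $\tS$ jumps and $\tiltg$ only gets larger while $g$ is unchanged, so the inequality is preserved; (iii) $u < \lambda(z_{\tau-})$, in which case $S$ jumps. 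In case (iii) the hypothesis~\eqref{cond:midlambda} applied to $z_{\tau-} \le \tiltz_{\tau-}$ yields $u < \lambda(z_{\tau-}) \le \tlambda(\tiltz_{\tau-})$, so $\tS$ also jumps and the update from \eqref{eq:g_t(s)} replaces $g_{\tau-}(\cdot)$ by $g_{\tau-}(\cdot)+h(\cdot)$ and $\tiltg_{\tau-}(\cdot)$ by $\tiltg_{\tau-}(\cdot)+\tilth(\cdot)$; combining the inductive inequality with \eqref{cond:order} preserves the invariant. Thus every jump of $S$ is also a jump of $\tS$, giving $S\subset \tS$.

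The main obstacle is purely bookkeeping: one must verify that the inductive construction is well defined on all of $\Rp$, i.e.\ that the enumeration of Poisson points up to time $T$ almost surely terminates and yields the Hawkes processes described by Definition~\ref{P}, and that no measurability issue arises from $\tlambda$ being evaluated at the (possibly strictly larger) argument $\tiltz_{\tau-}$. The first point follows because the canonical Poisson process is locally finite so $S \cap [0,T]$ and $\tS\cap [0,T]$ are built from finitely many Poisson points together with the parts of the process in $[0,T)$ already constructed. The second point is automatic since we built both processes as deterministic measurable functionals of the single random source $\pi$ together with the initial data $g_0,\tiltg_0$. No further probabilistic argument is needed; the maximality claim from Section~\ref{ssec:canonical} is not used here, it is only the existence of \emph{some} coupling that is asserted.
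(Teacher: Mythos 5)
Your proposal is correct and follows essentially the same route as the paper: the canonical Poisson coupling from Section~\ref{ssec:canonical} together with an induction propagating the pointwise order $g_t \le \tiltg_t$ (hence the order of intensities via \eqref{cond:midlambda}, hence every jump of $S$ forcing a jump of $\tS$, with \eqref{cond:order} preserving the order across jumps). The paper phrases the induction over the jump times of $\tS$ rather than over all canonical Poisson points, but the content is identical.
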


\begin{proof}
  If $a(t)$, $b(t)$ are two intensities and if $a(t)\le b(t)$ for all $t$, one can couple the point processes,
  such that the two processes jump together with rate $a(t)$ and the second one jumps by itself at rate $b(t)-a(t)$.
  This can also be done if $\forall \omega \in \Omega, a(t,\omega) \leq b(t,\omega)$.  (In fact this is done naturally
  by our construction that is discussed in the previous subsection.)  Hence it remains to prove that 
    \beq \lambda(g_t(0)) \leq \tlambda(\tiltg_t(0)) \label{eq:domination}.\eeq

  We know that up to the first jump in $\tS$ we have $g_t(0)=g_0(t), \tiltg_t(0)=\tiltg_0(t)$.  However we know from 
  \eqref{cond:order} and \eqref{cond:midlambda} that 
    \beq \lambda(g_0(t)) \leq \tlambda(\tiltg_0(t))\eeq
  which in turn implies that \eqref{eq:domination} holds up to first jump of $\tS$.
  Then we claim by induction that it holds for all times. Indeed at the time of jump the order of 
  $g \leq \tiltg$ is preserved because there is no jump in $S$
  before first jump of $\tS$ and  the jump of  $\tiltg$ is always larger because $h \leq \tilth$. 
\end{proof}

\subsection{Parent-Offspring Structure and the Branching representation}
We add the following parent-offspring structure to obtain
a random forest structure embedded in time. This will be  useful due to its connection to Galton-Watson trees (branching
processes).

We start with the Hawkes process corresponding to  functions $(\lambda,h,g_0)$ and let $S=\{\tau_1,\tau_2,...\}$, where
$\tau$ is an increasing sequence, i.e. $i<j \rightarrow \tau_i < \tau_j$. 
Let us also denote 
\beq \lambda_0(z):=\lambda(z)-\lambda(0) \eeq
Now to each $\tau_i$ we associate a randomly chosen parent element $p(\tau_i)$ from $S \cup \{-\infty\}$, where
$-\infty$ represents having no parent and being a root node.  
Given a sequence $(\tau_1,\tau_2,...,\tau_i)$,   we define $\{p(\tau_i)\} $ to be  mutually independent with  distributions given by :
\begin{eqnarray}
   \P[p(\tau_i)=-\infty|\tau_1,...\tau_i]&=&
   \frac{\lambda(0)}{\lambda(z_{\tau_i})}+\frac{\lambda_0(z_{\tau_i})}{\lambda(z_{ \tau_i})}\frac{g(\tau_i)}{z_{\tau_i}}
\\ \P[p(\tau_i)=\tau_j|\tau_1,...\tau_i] &=&
    1_{j<i} \frac{\lambda_0(z_{\tau_i})}{\lambda(z_{ \tau_i})}\frac{h(\tau_i-\tau_j)}{z_{\tau_i}}
\end{eqnarray}
Note that $\{p(\tau_i)\}$ while   mutually independent are  not identically distributed.

Figure \ref{HawkesColor} provides a particular visualization of parent-offspring  structure that is consistent with the above description.
\begin{figure} \label{HawkesColor}
  \begin{center}
    \includegraphics[scale=0.5]{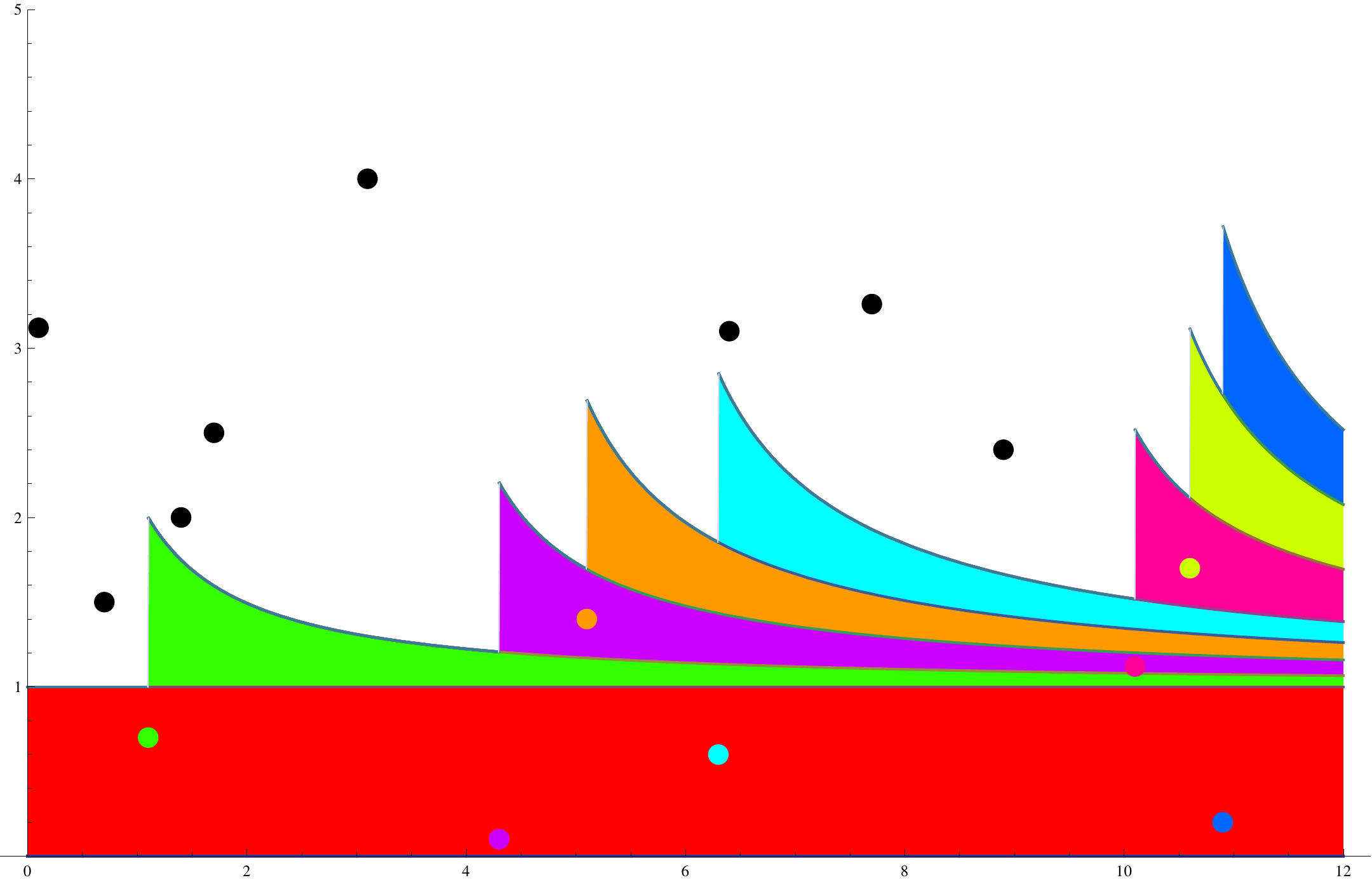}
    \caption{For both $\lambda(z)=1+z, \quad h(t)=\frac{1}{(1+t)^2}$.  Each region corresponds to children of point of
    corresponding color}
  \end{center}
\end{figure}
We are specifically interested in linear case which provides us with dual description of the same process as described 
in the following tool:

\begin{tool*}[Branching Process Equivalence]
  When $\lambda(z)=\blambda(z)=A+Bz$ roots have rate $A+Bg(t)$ and each tree is a branching process with
  the number of branches having a Poisson distribution with mean $B$. The distribution of age $\tau_j$ of any parent  at
  the time $\tau$ of birth of any child  is then given by
    \beq \label{shape}  \P[ \tau-\tau_j >t | p(\tau)=\tau_j]=\int_t^{\infty} h(s) ds.\eeq
\end{tool*}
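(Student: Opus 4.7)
The plan is to use the well-known cluster (or immigration-birth) representation for linear Hawkes processes: instead of starting from the definition via conditional intensity, I would construct a branching point process directly and then verify that its stochastic intensity coincides with the Hawkes intensity $\lambda(z_t) = A + Bz_t$.

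First, I would define an auxiliary process $\widetilde S$ as follows. Let $R$ be an inhomogeneous Poisson process on $\Rp$ with intensity $A + Bg(t)$; these are the \emph{roots}. Independently, attach to each $\tau \in R$ a Galton-Watson tree in which every individual produces a $\mathrm{Poisson}(B)$ number of children, each born at time parent-time plus an independent draw from the density $h$ (this is a density since $\int_0^\infty h(s)\,ds = 1$). Let $\widetilde S$ be the set of all birth times of all individuals in all trees. The intensity of $\widetilde S$ at time $t$, conditional on the history $\widetilde\F_t$, is by superposition the immigration rate plus the sum over each existing individual $\tau_j < t$ of its instantaneous child-birth rate $B h(t - \tau_j)$; that is,
\begin{equation*}
\widetilde r_t \;=\; A + Bg(t) + B \sum_{\tau_j \in \widetilde S_t} h(t - \tau_j) \;=\; A + B\widetilde z_t.
\end{equation*}
Since this is exactly the Hawkes intensity for $\lambda(z) = A + Bz$, uniqueness of the law of a simple point process determined by its $\F_t$-predictable intensity (the innovation theorem) gives $\widetilde S \law S$.

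Having identified $S$ with $\widetilde S$, the structural claims in the lemma are immediate from the construction: roots form a Poisson process with rate $A + Bg(t)$; each individual produces a $\mathrm{Poisson}(B)$ number of offspring because $B\int_0^\infty h(s)\,ds = B$; and the inter-birth age $\tau - \tau_j$ conditional on $\tau_j$ being the parent has density $h$, i.e.\ the survival function $\int_t^\infty h(s)\,ds$ claimed in \eqref{shape}. Finally, to check that this construction is consistent with the parent-assignment probabilities defined in the preceding subsection, I would apply the Poisson coloring/marking theorem: in the superposition decomposition $\widetilde r_t = (A + Bg(t)) + \sum_{\tau_j < t} B h(t - \tau_j)$, each actually observed point at time $\tau$ is independently attributed to the ``root'' source or to a previous $\tau_j$ with probability proportional to that component, which gives exactly
\begin{equation*}
\frac{A + Bg(\tau)}{A + Bz_\tau}, \qquad \frac{B h(\tau - \tau_j)}{A + Bz_\tau},
\end{equation*}
matching the formulas in the previous subsection after substituting $\lambda(0) = A$ and $\lambda_0(z) = Bz$.

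The one delicate point I expect is the application of the innovation theorem with a \emph{random} (predictable) intensity: one has to verify that $\widetilde r_t$ is indeed $\widetilde\F_t$-predictable and that the resulting martingale problem has a unique solution, but both are standard for simple point processes with locally integrable intensity. Everything else — the offspring count, the density of the age at birth, and the coloring argument — reduces to bookkeeping on the cluster representation.
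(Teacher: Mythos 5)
Your proof is correct, but it runs in the opposite direction from the paper's. The paper starts from the Hawkes process together with the parent-assignment probabilities already defined in the preceding subsection, and verifies the three claims directly: substituting $\lambda(0)=A$, $\lambda_0(z)=Bz$ into $\P[p(\tau_i)=-\infty]\lambda(z_{\tau_i})$ gives the root rate $A+Bg(\tau_i)$, and then it reads the offspring law off the canonical Poisson construction on the plane $\Rp\times\Rp$ --- the region of the plane attributed to a given point $\tau_j$ as its ``children zone'' has width $Bh(t-\tau_j)$ at time $t$, hence total Lebesgue measure $B\|h\|_1=B$, so the offspring count is Poisson$(B)$ and the shape of that region gives \eqref{shape}. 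You instead build the cluster (immigration--birth) process $\widetilde S$ from scratch, compute its predictable intensity, invoke the innovation/uniqueness theorem to conclude $\widetilde S\law S$, and then use Poisson coloring to check that the induced genealogy matches the parent-assignment scheme. This is the classical Hawkes--Oakes construct-then-identify argument. What your route buys is that the independence structure of the trees is manifest by construction, and it sidesteps a point the paper's area argument glosses over: the region attributed to $\tau_j$ in the plane is itself random (its vertical position depends on $z_{t-}$, hence on other points including descendants), so the claim that disjoint bands yield independent Poisson counts needs some care. The cost is that you must import the uniqueness-of-law machinery for point processes with predictable intensities, which the paper avoids entirely by working inside one fixed coupling. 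Your verification of the matching probabilities $\frac{A+Bg(\tau)}{A+Bz_\tau}$ and $\frac{Bh(\tau-\tau_j)}{A+Bz_\tau}$ against the general formulas is exactly right. One small thing worth stating explicitly in your write-up: when you assert that the conditional intensity of $\widetilde S$ given only the observed points (not the genealogy) is still $A+Bg(t)+B\sum_{\tau_j<t}h(t-\tau_j)$, you are implicitly using that the future birth rate depends on the past only through the birth times and not through the tree structure; this is true but is the one place where the projection onto the smaller filtration actually does some work.
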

\begin{proof}
  By the above scheme we see that the intensity of roots is given by
  \beq \P[p(\tau_i)=-\infty]\lambda(z_{\tau_i})=\lambda(0)+Bz_{\tau_i}\frac{g(\tau_i)}{z_{\tau_i}}=A+Bg(\tau_i). \eeq
  Now each new point creates an area of size $B \|h\|_1=B$ and hence the number of children is Poisson$(B)$.  Now the
  shape of
  the area is given via $h$ which then proves \eqref{shape}.
\end{proof}
\vskip 0pt
\section{Existence} \label{sec:existence}
We begin with preliminary results regarding existence and ergodicity. 
Let us consider the following hypothesis on Hawkes Process.

\begin{hyp}\label{hyp:existence}
  The function  $\lambda$ satisfies: 
  \begin{equation} \label{cond:wd}
    \exists A,B \geq 0, \forall z \in \R, \lambda(z) \leq \blambda(z):=A+Bz 
  \end{equation}
\end{hyp}
which brings us to our first proposition:
\begin{proposition} \label{prop:convergence}
  Suppose Hawkes process satisfies Hypothesis~\ref{hyp:existence}. Then the following
  three statements hold:
  \begin{enumerate}[(i)]
  \item Hawkes process is well defined for all times.
  \item  When the Hypothesis \ref{hyp:existence} is satisfied with $B<1$, there exists an invariant distribution
    for process $g_t$.
  \item If in addition $\lambda$ is non-decreasing and we start from 
    $0$ initial condition, i.e. $g_0=0$, then the distribution of $g_t$ 
    converges weakly to $\mu_0$, which being the unique minimal invariant measure is ergodic.
  \end{enumerate}
\end{proposition}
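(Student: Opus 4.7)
The plan is to prove the three assertions in order, relying on the stochastic domination lemma and the branching-process equivalence already established.

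For (i), I would apply the stochastic domination lemma with the comparison triple $(\bar\lambda, h, g_0)$, where $\bar\lambda(z) = A + Bz$, to produce a coupling $S \subseteq \tS$ with the linear Hawkes process $\tS$. The branching representation realizes $\tS$ as a superposition of Galton--Watson family trees seeded by a Cox immigration process of rate $A + B g_0(t)$, each immigrant having Poisson$(B)$ offspring at delays distributed by $h$. On any $[0,T]$, local integrability of $g_0$ bounds the mean number of immigrants, and each family tree has a.s.\ finitely many points in $[0,T]$ because a continuous-time branching process is non-explosive even when supercritical. Hence $S \cap [0,T]$ is a.s.\ finite and $g_t$ is well defined.

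For (ii), when $B < 1$ the linear dominator is stable. Its mean intensity $\bar r(t)$ satisfies the renewal equation
\[
  \bar r(t) = A + B g_0(t) + B \int_0^t h(t-s)\,\bar r(s)\,ds,
\]
which after iteration using $\int h = 1$ yields a uniform bound of the form $\bar r(t) \le A/(1-B) + B(1-B)^{-1} \|g_0\|_\infty$ (and an analogue for $g_0$ in the wider class $\mathcal C$). Integrating $g_t(s)$ against the kernel on a finite window then gives $\sup_{t \ge 0} \E_{g_0}[\int_0^n g_t(s)\,ds] < \infty$, hence tightness of $\{\mu_{g_0, t}\}_{t \ge 0}$ in the $L^1_{loc}$-topology on $X$. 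A Krylov--Bogolyubov argument applied to the Cesaro averages $\tfrac{1}{T}\int_0^T \mu_{0, t}\,dt$ then produces an invariant measure.

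For (iii), monotonicity of $\lambda$ turns the canonical coupling into a stochastically monotone Markov semigroup $T_t$ on $X$: $f \le g$ pointwise implies $T_t f \le T_t g$ in stochastic order. Starting from $g_0 = 0$ and using the semigroup identity, $\mu_{0, t+s} = T_s \mu_{0, t} \ge T_s \delta_0 = \mu_{0, s}$, so $\{\mu_{0, t}\}_{t \ge 0}$ is stochastically increasing in $t$. Tightness from (ii) upgrades this to weak convergence to a limit $\mu_0$, which is invariant by Feller continuity of $T_t$. Any invariant $\mu$ dominates $\delta_0$, so $\mu = T_t \mu \ge T_t \delta_0 = \mu_{0, t}$ for every $t$, giving $\mu \ge \mu_0$; hence $\mu_0$ is the unique minimum of the convex set of invariant laws, which makes it an extreme point and therefore ergodic.

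The main obstacle is the tightness step in (ii): passing from the renewal-based pointwise bound on $\bar r(t)$ to tightness of the infinite-dimensional laws $\mu_{g_0, t} \in \MM(X)$ in the $L^1_{loc}$-topology, and verifying the Feller continuity of $T_t$ needed both for Krylov--Bogolyubov in (ii) and for the invariance of the limit in (iii). Non-explosion in (i) and the minimality-to-ergodicity inference in (iii) are comparatively routine once these tools are in place.
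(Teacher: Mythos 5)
Your proposal follows essentially the same route as the paper: domination by the linear process $\blambda(z)=A+Bz$ via the branching representation for well-definedness, a uniform bound on the mean intensity giving tightness and a Cesaro/Krylov--Bogolyubov limit for existence, and stochastic monotonicity of the coupled semigroup for convergence from $g_0=0$, minimality, extremality, and hence ergodicity. The renewal-equation bound and the Feller-continuity caveat you flag are just more explicit versions of steps the paper states tersely, so the two arguments coincide in substance.
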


\begin{proof}[Proof of Proposition \ref{prop:convergence}]  (i) Consider Hawkes process with $\blambda(z)=A+Bz$. By
  Galton-Watson representation it is well defined for all
  times. It can be coupled  with our  process  which it will dominate. Hence the dominated process is also  well defined
  for all times.
  
  (ii) If $B<1$, then the dominating process has  uniformly bounded  density; then so does the dominated process and we
  can get an invariant distribution $\mu_{g_0}$ by taking Cesaro limit along a subsequence 
    \beq \label{eq:mu} \mu_{g_0}:=\lim_{t_k \to \infty} \frac 1{t_k} \int_0^{t_k} \mu_{s,g_0} ds \eeq
  where $\mu_{s,g_0}$ represents marginal distribution of the impulse-function at time $s$ given the initial condition
  $g_0$.

  (iii) The expected size of each tree is finite and hence we
  have bounded  density.  Since $\lambda$ is non-decreasing the process is order-preserving. Hence 
    \begin{equation} \E_0\big[ f(g(t+s_1),\ldots, g(t+s_k)) \big] \end{equation}
  is non-decreasing in $t$ for any non-decreasing $f$ which implies convergence of $\mu_{s,0}$ to $\mu_0$. 

  This gives us weak convergence.  We claim  that  invariant distribution $\mu_0$
  is  the unique minimal one, i.e. stochastically dominates any other invariant distribution.  To see this consider any
  other invariant $\tmu$ and choose initial condition chosen randomly  according  to  $\tmu$; but then we have
  point-wise domination initially and by coupling we see that  there is domination at all times and letting
  $t\to\infty$,  we see that $\tmu$ dominates $\mu$.  Hence $\mu$, being the unique   minimal  invariant measure, is
  extremal. Hence it  is ergodic.
\end{proof}

\section{Uniqueness} \label{sec:uniqueness}
\begin{definition} \label{stable}
  Let $\mu$ be an  invariant distribution supported on  a class of functions $\CC$, meaning that
    \beq \mu(\CC)=1 \eeq
  Then we say that the pair $(\lambda,h)$ is {\it
  $(\CC,\mu)$-stable} if starting from any initial condition $g_0$ in $\CC$ the distribution $\mu_{t,g_0}$ of the
  impulse
  function at time $t$ converges to $\mu$:
    \beq \lim_{t \to \infty} \mu_{t,g_0} \mathop{=}^d \mu \eeq
  where $\mathop{=}^d$ signifies that the limit is in the  sense of weak convergence. 
\end{definition}

Now we turn to main results of this paper.  Here is our $2^{nd}$ hypothesis:

\begin{hyp}\label{hyp:H}
  The function $\lambda$ is non-decreasing and it satisfies 
  \begin{equation} \label{cond:modulus}
      \sup_{x \in \Rp} \left( \lambda(x+s)-\lambda(x) \right)\le  \phi(s) 
    \end{equation}
    for some concave non-decreasing $\phi$ satisfying 
    \begin{equation} \label{cond:phiH}
      \int_{0}^{\infty} \phi(H(s))ds=C<\infty, \quad \mbox{where } H(s)=\int_s^\infty h(t)dt
    \end{equation} 
\end{hyp}

\begin{theorem} \label{thrm:modulus}
  If Hawkes process satisfies both Hypothesis \ref{hyp:existence} with $B<1$ and Hypothesis \ref{hyp:H} then
  pair $(\lambda,h)$ is \it{$(\CC,\mu)$-stable} as in definition \ref{stable} 
  with 
  \begin{equation}
      \CC:=\left\{ g \in C(\Rp): \int_0^{\infty} \phi(g(s))ds< \infty \right\}
  \end{equation} 
  and $\mu$ being $\mu_0$ from Proposition \ref{prop:convergence} part (iii).   
\end{theorem}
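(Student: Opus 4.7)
The plan is to prove the stability by a coupling argument that reduces the problem to the already-established convergence from the zero initial condition in Proposition \ref{prop:convergence}(iii). First, fix any $g_0\in\CC$ and apply the Stochastic Domination tool of Section \ref{sec:tools} to the triples $(\lambda,h,0)$ and $(\lambda,h,g_0)$: since $\lambda$ is non-decreasing and $0\le g_0$, hypotheses \eqref{cond:order} and \eqref{cond:midlambda} are satisfied (with $\tlambda=\lambda$, $\tilth=h$), so there is a joint realization on the canonical Poisson space with $S^{(1)}\subseteq S^{(2)}$, where $S^{(1)}$ is the $0$-started process and $S^{(2)}$ the $g_0$-started one. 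Write $D:=S^{(2)}\setminus S^{(1)}$ for the ``extra'' jumps caused by the nonzero initial condition, and let $z_t^{(i)}=g_t^{(i)}(0)$.

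The next step is to bound the conditional intensity of $D$ in the coupled space. Since $z_t^{(2)}\ge z_t^{(1)}$ and $\lambda$ is non-decreasing, the modulus hypothesis \eqref{cond:modulus} yields
\[
U_t:=\lambda(z_t^{(2)})-\lambda(z_t^{(1)})\le \phi\!\left(g_0(t)+\sum_{\tau\in D,\,\tau<t} h(t-\tau)\right),
\]
since $z_t^{(2)}-z_t^{(1)}=g_0(t)+\sum_{\tau\in D}h(t-\tau)$. Concavity of $\phi$ with $\phi(0)=0$ (which may be assumed without loss by replacing $\phi$ by $\phi-\phi(0)$) gives subadditivity, so $U_t\le \phi(g_0(t))+\sum_{\tau\in D,\,\tau<t}\phi(h(t-\tau))$. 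I would then integrate in $t$, use Fubini on the second sum, and rewrite $\int_{t-\tau}^\infty\phi(h(u))\,du$ in terms of $\phi\circ H$ by a further application of concavity/subadditivity (for example, bounding $\sum_{k\ge 0}\phi(h(\tau+k))$ by a comparable integral of $\phi(H)$). The integrability condition \eqref{cond:phiH}, combined with $g_0\in\CC$, should then close a sub-critical renewal-type bound on $\E[\#D]$, analogous to the linear recursion used for the dominating process, and show that $D$ has finitely many points a.s.

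Once $D$ is almost surely finite, there is a random time $T<\infty$ after which $S^{(1)}$ and $S^{(2)}$ agree, and for $t\ge T$ the two impulse functions differ only by the residual shift $g_0(t+\cdot)$ of the initial condition. Using again $g_0\in\CC$ and concavity of $\phi$ (Jensen's inequality on local averages: $\phi(\tfrac1n\int_t^{t+n}g_0)\ge \tfrac1n\int_t^{t+n}\phi(g_0)\to 0$), the $L^1_{loc}$-norm of this shift vanishes as $t\to\infty$, which yields weak convergence of $g_t^{(2)}$ to the same limit as $g_t^{(1)}$, namely $\mu_0$ from Proposition \ref{prop:convergence}(iii).

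The main obstacle is the middle step: the intensity $U_t$ is itself driven by the self-exciting process $D$, so one has a fixed-point/recursive inequality rather than a direct bound, and the relevant integrability is encoded in $\phi(H)$ rather than in $\phi(h)$, which prevents a naive Lipschitz-style Gronwall argument. Making this work requires carefully exploiting concavity of $\phi$ to convert the ``per-kernel'' cost $\phi(h(t-\tau))$ into the ``aggregate tail'' cost $\phi(H(\cdot))$, and then using the subcriticality $B<1$ (via the linear upper envelope of Hypothesis \ref{hyp:existence} and the Galton--Watson representation) to iterate. Verifying weak convergence from the coupling—i.e.\ checking that the event $\{T<\infty\}$ together with vanishing local averages of $g_0$ gives $L^1_{loc}$-closeness of $g_t^{(2)}$ and $g_t^{(1)}$—is routine once the disagreement $D$ is controlled.
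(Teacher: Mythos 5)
There is a genuine gap in the middle step, and it is exactly the step you flag as the ``main obstacle.'' After the coupling $S^{(1)}\subseteq S^{(2)}$, the set $D$ of extra points is dominated by a cluster (Hawkes-type) process with immigration rate $\phi(g_0(t))$ and offspring kernel $\phi(h(\cdot))$. For $\E[\#D]<\infty$ (hence a.s.\ finiteness of $D$) you need this cascade to be subcritical, i.e.\ $\int_0^\infty \phi(h(t))\,dt<1$. That is essentially the paper's Hypothesis \ref{hyp:H2}, which drives the speed-of-convergence Theorem \ref{thrm:speed0} by precisely your branching-domination argument. It neither follows from nor is implied by Hypothesis \ref{hyp:H}: the condition $\int_0^\infty\phi(H(s))\,ds<\infty$ with $H(s)=\int_s^\infty h(t)\,dt$ permits $\int_0^\infty\phi(h(t))\,dt$ to exceed $1$ or even to be infinite (e.g.\ $\phi(s)=\sqrt{s}$ and $h(t)=p(1+t)^{-p-1}$ with $p$ slightly above $2$ gives $\int\phi(H)<\infty$ but $\int\phi(h)>1$). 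Your proposed fix --- converting the per-kernel cost $\phi(h(t-\tau))$ into the aggregate tail cost $\phi(H(\cdot))$ --- controls the total influence of a single extra point on the intensity over time, but not the mean number of its offspring in $D$, which is what decides whether the cascade dies out. So the one-shot coupling cannot be closed under Hypothesis \ref{hyp:H} alone.

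The paper's proof replaces ``the discrepancy set is a.s.\ finite'' with a restart/recurrence argument that only needs a uniformly positive probability of perfect coupling. By Jensen's inequality and concavity of $\phi$, the probability that the $f$-started and $0$-started processes produce identical point sets is at least $\exp\left(-\int_0^\infty\phi(f(t))\,dt\right)$. The condition $\int\phi(H)<\infty$ enters not through the cascade but through the mean-field identity $\E_\mu[g(s)]=\E_\mu[\lambda(g(0))]H(s)$ (Lemma \ref{lemma:mean-field}) combined with stochastic domination by the linear process, which shows that $\limsup_t\E\left[\int_0^\infty\phi(g_t(s))\,ds\right]$ is bounded by an explicit $K$; hence the process a.s.\ keeps returning to the set $\CC'=\{f:\int_0^\infty\phi(f(s))\,ds<K\}$. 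Each visit to $\CC'$ gives an independent chance of at least $e^{-K}>0$ of coupling forever, so some attempt a.s.\ succeeds. Your outer structure (canonical coupling, subadditivity of $\phi$, reduction to Proposition \ref{prop:convergence}(iii)) is sound, and your argument as written would prove the theorem under the stronger Hypothesis \ref{hyp:H2}; to obtain the stated result you must drop the demand that the first coupling attempt succeed and iterate as above.
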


\begin{remark}
  In this theorem we relax the Lipschitz condition with constant 1 on $\lambda$ that was imposed in \cite{Bremaud} (in a
  slightly different form since 
  $\| h \|$ was not normalized).  When we do have this assumption the results of \cite{Bremaud} follow as a corollary of
  the above theorem \ref{thrm:modulus}.  In fact the results follow even under weaker assumption that $\lambda$ is
  Lipschitz with constant $L$ for some $L$: 
  \beq \forall x,y \in \Rp |\lambda(x)-\lambda(y)|  \leq L |x-y| \eeq  
\end{remark}

  Before we start with proof of theorem \ref{thrm:modulus} let us prove the following lemma:

\begin{lemma} \label{lemma:mean-field}
  If $\mu$ is a stationary distribution of the impulse function $g_t$ of a Hawkes process then 
  \beq \E_{\mu}[g(s)]=\E_{\mu}[\lambda \big(g(0) \big)]H(s). \eeq 
\end{lemma}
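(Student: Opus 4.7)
The plan is to couple the martingale/compensator structure of the jump process with the stationarity of $\mu$ to compute $\E_\mu[g(s)]$ explicitly. The guiding idea is that under $\P_\mu$ the mean intensity is the constant $\rho := \E_\mu[\lambda(g(0))]$, so the impulse function is on average a convolution of $h$ against a uniform intensity, which reproduces $H(s)$.

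First I would start the process with $g_0 \sim \mu$ and use the decomposition
\beq g_t(s) = g_0(t+s) + g_t^*(s), \qquad g_t^*(s) = \sum_{\tau \in S_t} h(t+s-\tau). \eeq
Viewing $g_t^*(s) = \int_0^t h(t+s-u)\,dN_u$ with $N_u = \#S_u$ having stochastic intensity $\lambda(g_u(0))$, the compensation formula combined with stationarity ($\E_\mu[\lambda(g_u(0))] = \rho$ for all $u \ge 0$) and Fubini gives
\beq \E_\mu[g_t^*(s)] = \rho \int_0^t h(t+s-u)\,du = \rho\,[H(s) - H(t+s)]. \eeq

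Next I would take expectations of the decomposition. Using $\E_\mu[g_t(s)] = F(s)$ (stationarity of $g_t$) and $\E_\mu[g_0(t+s)] = F(t+s)$ (since $g_0 \sim \mu$), where $F(s) := \E_\mu[g(s)]$, I obtain the functional identity
\beq F(s) - F(t+s) = \rho\,[H(s) - H(t+s)] \quad \text{for all } s, t \ge 0, \eeq
so $F - \rho H$ is a constant $c$ on $[0,\infty)$. To close the argument I would let $s \to \infty$: $H(s) \to 0$ since $\|h\|_1 = 1$, and $F(s) \to 0$ for the stationary measures relevant here (those supported on functions vanishing at infinity, such as the class $\CC$ of Theorem~\ref{thrm:modulus}). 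Hence $c = 0$ and $\E_\mu[g(s)] = \rho\,H(s)$.

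The main obstacle is precisely that final step: pinning down $F(\infty) = 0$ in full generality requires an integrability or decay hypothesis on the support of $\mu$, which in practice is furnished by the hypotheses under which stationary distributions actually arise. A cleaner alternative which avoids the limit entirely is to realize $\mu$ as the time-$0$ marginal of a stationary two-sided Hawkes process on $\R$; then $g(s) = \sum_{\tau \in S,\, \tau < 0} h(s-\tau)$ and Campbell's mean-measure formula gives $\E_\mu[g(s)] = \rho \int_{-\infty}^0 h(s-\tau)\,d\tau = \rho\,H(s)$ in one line.
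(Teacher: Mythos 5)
Your main argument takes a genuinely different route from the paper's. The paper proves the lemma in one displayed computation: by stationarity it writes $\E_{\mu}[g(s)]=\E_{\mu}\big[\int_s^{\infty}\lambda(g(0))h(t)\,dt\big]$ and pulls the constant $\E_{\mu}[\lambda(g(0))]$ out of the integral --- in effect the Campbell/Palm mean-measure formula for the two-sided stationary process, which is exactly the ``cleaner alternative'' you sketch in your final sentence. Your primary argument instead runs the dynamics forward from $g_0\sim\mu$, splits $g_t(s)=g_0(t+s)+g_t^*(s)$, applies the compensation formula to get $\E_\mu[g_t^*(s)]=\rho\,[H(s)-H(t+s)]$, and deduces that $F-\rho H$ is constant; that computation is correct, and what it buys is transparency, because it isolates precisely where an extra input is needed, namely that the constant vanishes, i.e. that $F(u)=\E_\mu[g(u)]\to 0$. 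You are right to flag this as the main obstacle: it is a real hypothesis and not a formality. For instance, if $\lambda\equiv 0$, the point mass at a positive constant function is an invariant distribution for which the stated identity fails, so some decay condition on the support of $\mu$ (e.g. $\mu(\CC)=1$, or realizability of $\mu$ by a two-sided stationary point process with no residual initial-condition part) must be invoked. The paper's one-line proof silently makes the same assumption when it identifies $g(s)$ under $\mu$ with the contribution of past points alone. So your proposal is correct to the same extent as the paper's, is more honest about the hidden step, and your fallback Campbell argument coincides with what the paper actually does.
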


\begin{proof} By stationarity
  \begin{eqnarray}
    \nonumber \E_{\mu}[g(s)]  &=&\E_{\mu}\left[ \int_s^{\infty} \lambda\big(g(0))h(t) dt \right]
			    \\&=&\E_{\mu}[\lambda \big(g(0) \big)]\int_s^{\infty} h(t) dt
			    \\&=&\E_{\mu}[\lambda \big(g(0) \big)]H(s).\nonumber 
  \end{eqnarray}
\end{proof}

\begin{proof}[Proof of Theorem \ref{thrm:modulus}:]
  We use a recurrence argument. It is enough to show that exists some class $\CC' \ss \CC$ such that:

  (i) $f \in \CC'$ implies that $\P^*_f$ and $\P^*_0$ have non-trivial overlap
      \beq I(\P^*_f, \P^*_0):=1-d_{TV}(\P^*_f, \P^*_0)\geq \delta >0 \eeq

  (ii) $\CC'$ is {\it recurrent with respect to $\CC$}: starting from any point in class $\CC$ 
    the impulse function will enter $\CC'$ at some time in the future, i.e. 
      \beq \forall g_0 \in \CC, \P_{g_0}[\forall t \in \Rp, g_t \notin \CC' ]=0 \eeq

  We will split the proof into three steps.  In step (i) and (ii) we will show the above statements
  and in step (iii) we will complete  the proof using recurrence.  Step (i) in turn will suggest the choice of $\CC'$.

  Step (i): First observe that by applying Jensen's inequality and concavity of $\phi$ we obtain:
    \begin{eqnarray}
      I(\P^*_f, \P^*_0)&=&\E_{0}\left[ \exp \left(-\int_0^{\infty} |\lambda(z_t+f(t))-\lambda(z_t)| dt \right) \right]
      \\&\geq& \exp \left(-\E_{0}\left[\int_0^{\infty} |\lambda(z_t+f(t))-\lambda(z_t)| dt \right]\right)
      \\&\geq& \exp \left(-\E_{0}\left[\int_0^{\infty} \phi(f(t)) dt \right] \right)
      \\&=& \exp \left(-\int_0^{\infty} \phi(f(t)) dt \right)
    \end{eqnarray}

  Step (ii): Step (i) suggests that  we take
    \beq \label{CC'} \CC':=\left\{ g \in C(\Rp): \int_0^{\infty} \phi(g(s))ds< K \right\} \eeq 
    whereas for $\CC'$ to satisfy the recurrence condition we will make  a suitable choice of  $K$.  We know that for
    any finite time $t$ process which started from $\CC$ will remain in $\CC$ since $\phi$ is convex.  Now
    consider starting from $\CC$ but replacing  $\lambda$  by with $\blambda$ in the definition  of our process.  
    Then, because $\phi$ is nondecreasing   by applying stochastic domination   we get
    \begin{eqnarray} \label{eq:theorem3start}
      \E^{\lambda}_{g} \left[\int_0^{\infty} \phi(g_t(s))ds \right] &\leq&  \E^{\blambda}_{g} \left[\int_0^{\infty}
      \phi(g_t(s))ds \right].
    \end{eqnarray}
    We now take  $\limsup_{t \to \infty}$ on  both sides of \eqref{eq:theorem3start} and  by proposition
    \ref{prop:convergence}(iii),   the
    right hand side converges to a limit from any initial $g$.

    \begin{eqnarray} 
      \overline{\lim_{t \to \infty}} \E^{\lambda}_{g} \left[\int_0^{\infty} \phi(g_t(s))ds \right]
      &\hspace{-10pt} \leq&  \lim_{t \to \infty} \E^{\blambda}_{g} \left[\int_0^{\infty} \phi(g_t(s))ds \right] \\
      &\hspace{-10pt} \leq&  \lim_{t \to \infty} \E^{\blambda}_{0} \left[\int_0^{\infty} \phi(g_t(s))ds \right] \\
      \label{eq:theorem3end2}&\hspace{-10pt} \leq&  \lim_{t \to \infty} \int_0^{\infty} \phi( 
      \E^{\blambda}_{0}[\lambda(g_t(0))]H(s))ds 
    \end{eqnarray}
    where the line \eqref{eq:theorem3end2} follows from Lemma \ref{lemma:mean-field}.  
    Recall that part of assumption of hypothesis 2 is that $\phi$ is concave and increasing. While  concavity  implies
    that $\phi(cx) \leq c \phi(x)$ for $c \geq 1$, monotonicity implies  that $\phi(cx) \leq \phi(x)$ for $c<1$. 
    Combining these two we get $\phi(cx) \leq \max(c,1) \phi(x)$. Applied to \eqref{eq:theorem3end2} with
    $c=\E^{\blambda}_{0}[\lambda(g_t(0))]$ we obtain:

    \begin{eqnarray}
      \label{eq:theorem3end} \label{eq:K}
      \overline{\lim_{t \to \infty}} \E^{\lambda}_{g} \left[\int_0^{\infty} \phi(g_t(s))ds \right] 
      &\hspace{-10pt} \leq& \hspace{-5pt} C  \max( \lim_{t \to \infty} \E^{\blambda}_{0}[\lambda(g_t(0))],1)   
    \end{eqnarray}
    where $C=\int_0^{\infty} \phi(H(s))ds<\infty$ by Hypothesis \ref{hyp:H}.
    Hence \eqref{eq:K} is finite since  $\lim_{t \to \infty} \E^{\blambda}_{0}[\lambda(g_t(0))]<\infty$
    by branching representation. Hence $\mu(\CC)=1$ and in \eqref{CC'} it is enough to set
    \begin{eqnarray}
      K=\max( \lim_{t \to \infty} \E^{\blambda}_{0}[\lambda(g_t(0))],1) \times \big[ \int_0^{\infty} \phi(H(s))ds\big].
    \end{eqnarray}

  Step (iii): Hence we return almost surely  to the class $\CC'$.  Now starting from any initial condition in $\CC'$,
    \begin{eqnarray} \label{stepiiibound}
      I(\P^*_f, \P^*_0)&\geq&1-\exp \left(-K \right)
    \end{eqnarray}
    and result follows from the following schematic representation
      \beq \label{eq:collapsed} g \rightarrow \CC  \rightleftarrows \CC' \rightarrow \mu \eeq
    where all arrows represent transitions with uniformly positive probability 
    which implies the convergence to $\mu$.  

    Let us describe this in detail. Define two sequences of alternating  stopping times as follows:
    Let stopping time $\vsi_1$ be the first time that $g_t$ is in $\CC'$.
    Then we try to couple the process $S$ with $S^{(1)}$ where $S^{(1)}$ is Hawkes process with same pair $(\lambda,h)$
    but that starts at time $t$ with $g_t=0$; we use canonical coupling.  
    Then by step (i) the coupling is successful with probability at least $\delta>0$.
    If it is not then there is a stopping time $\upsilon_1 \in S \setminus S^{(1)}$, in this case we
    restart the procedure defining $\vsi_i, \upsilon_i$ by the following recursive definition
    \begin{eqnarray}
      \vsi_1		&=&\inf \{t: g_t \in \CC'\} \nonumber \\
      \forall i \geq 1, \upsilon_i 	&=&\inf \{t>\vsi_i: t \in S \setminus S^{(k)}\} \label{eq:stimes}\\
      \forall i >1,	\vsi_i		&=&\inf \{t>\upsilon_{i-1}: g_t \in \CC' \} \nonumber
    \end{eqnarray}
    where $S^{(k)}$ is Hawkes Process that starts at time $\vsi_k$ with condition $g_t=0$. 
    Hence we want to show that almost surely for some $i$, $\vsi_i=\infty$ which is immediate from step (i):

    \begin{eqnarray}
      \P[\forall i, \vsi_i<\infty] 
      &=&\E[\prod_{i=1}^{\infty} I(\P^*_{g_{\tau_i}}, \P^*_0)]\\
      &\leq& \prod_{i=1}^{\infty}  \big( 1-e^{-K} \big)=0.
    \end{eqnarray}
\end{proof}

Finally we turn to the last theorem where $\lambda$ can have jumps. 

\begin{hyp} \label{hyp:jumps} 
  (i) Function $\lambda$ is non-decreasing and satisfies:
    \begin{equation} \label{cond:jumps:l} 
      \mbox{ $\lambda(0)>0$, $\lambda \leq \blambda$, $B<1$}
    \end{equation}
  (ii) Function $h$ is convex and satisfies: 
    \begin{equation} \label{cond:jumps:h}
      \mbox{$\|h\|_{\infty}<\infty$, $\log |h'(x)|=o(x)$}, 
      \int_0^{\infty} t h(t) dt<\infty 
    \end{equation}
\end{hyp}

\begin{remark}
  These assumptions allow large tails in $h$ which was the aim of this study, in     
  particular $h(x)=\frac{p}{(1+x)^{p+1}}$ where $p>0$. 
\end{remark}
\begin{remark}
  These assumptions can be weakened to allow piece-wise decreasing $h$ as long as $h^{-1}(y)$ is finite for any $y \in
  \Rp$.
\end{remark}

\begin{theorem} \label{thrm:jumps}
  If Hawkes process satisfies Hypothesis \ref{hyp:existence} with $B<1$ and 
  Hypothesis \ref{hyp:jumps} then pair $(\lambda,h)$ is \it{$(\CC,\mu)$-stable} as in definition \ref{stable} 
  with 
  \begin{equation} \label{def:Class2} 
    \CC:=\left\{ g \in C(\Rp):  \int_0^{\infty} t g(t)dt<\infty \right\} 
  \end{equation}
  and $\mu$ being $\mu_0$ from Proposition \ref{prop:convergence} part (iii).   
\end{theorem}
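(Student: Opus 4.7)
The plan is to follow the two-step architecture of Theorem \ref{thrm:modulus}: identify a sub-class $\CC' \subseteq \CC$ satisfying (I) the overlap bound $I(\P^*_f,\P^*_0)\ge\delta>0$ for every $f\in\CC'$, and (II) recurrence from $\CC$. The alternating-stopping-time scheme of \eqref{eq:stimes}--\eqref{eq:collapsed} then delivers convergence of $\mu_{t,g_0}$ to $\mu_0$ for every $g_0\in\CC$.

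For step (II), I would take $\CC' := \CC \cap \{\|g\|_1 < K\}$. Applied to the dominating linear $(\bar\lambda,h)$-process, Lemma \ref{lemma:mean-field} gives $\E^{\bar\lambda}_{\mu_0}[\|g\|_1] = c\int_0^\infty H(s)\,ds = c\int_0^\infty s\,h(s)\,ds < \infty$ by the first-moment assumption $\int_0^\infty t\,h(t)\,dt<\infty$ of Hypothesis \ref{hyp:jumps}(ii). Together with the stochastic domination lemma of Section \ref{sec:tools} and Proposition \ref{prop:convergence}(iii), this uniformly bounds $\limsup_{t\to\infty}\E^\lambda_{g_0}[\|g_t\|_1]$ over $g_0\in\CC$, so the process almost surely enters $\CC'$; invariance of $\CC$ itself is a direct calculation on $g_t(s)=g_0(t+s)+\sum_\tau h(t+s-\tau)$.

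The main obstacle is step (I). Jensen's inequality still gives
\[
  I(\P^*_f,\P^*_0) \;\ge\; \exp\!\left(-\E_0\!\left[\int_0^\infty\!\bigl(\lambda(z_t+f(t))-\lambda(z_t)\bigr)dt\right]\right),
\]
but the pointwise estimate $\lambda(x+f(t))-\lambda(x)\le\phi(f(t))$ used in Theorem \ref{thrm:modulus} is unavailable once $\lambda$ has jumps. My plan is to work in expectation: establish that under $\P_0$ the marginal $z_t$ admits a density $\rho_t$ which, for $t$ large, is uniformly bounded with good tails, so that the change of variable
\[
  \E_0\!\left[\lambda(z_t+f(t))-\lambda(z_t)\right] = \int \lambda(x)\bigl[\rho_t(x-f(t))-\rho_t(x)\bigr]dx
\]
can be estimated by decomposing the monotone $\lambda\le A+Bz$ into continuous and jump parts. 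The continuous part contributes $O(f(t))$ via Lipschitz control weighted by the tail of $\rho_t$; each jump of $\lambda$ at height $\ell_i$ of size $a_i$ contributes at most $a_i\|\rho_t\|_\infty f(t)$, and the growth bound $\sum_{\ell_i\le M} a_i \le A+BM$ combined with the tail control of $\rho_t$ makes the total jump contribution $O(f(t))$ as well. Integrating over $t$ and applying Jensen then yields $I(\P^*_f,\P^*_0) \ge e^{-C\|f\|_1} \ge e^{-CK}>0$ uniformly on $\CC'$.

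The hard part will be establishing the uniform-in-$t$ density bound for $z_t$: this is precisely what Hypothesis \ref{hyp:jumps}(ii) is designed to deliver. Convexity of $h$ together with $\|h\|_\infty<\infty$ makes $h$ uniformly continuous, so each event smooths the conditional law of $z_t=\sum_\sigma h(t-\sigma)$ along the graph of $h$, while the subexponential condition $\log|h'(x)|=o(x)$ provides the quantitative regularity needed to propagate a bounded Lipschitz constant for $\rho_t$ forward in time. The transient regime near $t=0$, where $z_t$ is still essentially deterministic and no density bound is available, has to be handled by a separate short-time estimate that uses only $\lambda\le\bar\lambda$ and the bounded growth of the 0-process on $[0,T]$. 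Once both estimates are in place, the recurrence scheme of Theorem \ref{thrm:modulus} closes the argument.
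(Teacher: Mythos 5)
Your overall architecture (overlap on a recurrent sub-class $\CC'$, then the alternating stopping times) matches the paper, and your Step (II) is in the right spirit, but Step (I) --- which is the entire point of Theorem \ref{thrm:jumps} --- has a genuine gap, and you are missing the one idea the paper actually uses to close it. The paper does \emph{not} compare $\P^*_f$ to $\P^*_0$ and does not attempt any density bound for the transient marginals $\rho_t$ under $\P_0$. Instead it changes the reference process: it compares $\P^*_f$ to $\P^*_{\mu^*_f}$, where $\mu^*_f$ is the invariant law $\mu_0$ shifted by $f$. Invariance then lets it replace $z_t$ by $z_0\sim\nu$ inside the expectation, so the whole estimate reduces to properties of the single, time-independent stationary marginal $\nu$: Lemma \ref{lemma:total} gives $\nu$ an exponential tail via the Galton--Watson representation, and Lemma \ref{lemma:local} gives a density bound $\psi(z)\le c_2e^{-\theta_2 z}$ via an occupation-time argument that uses $\lambda(0)>0$, convexity of $h$, and $\log|h'(x)|=o(x)$ to control $\E[1/|h'(\tau_1)|]$. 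Plugging these into the triple integral $\int\!\!\int\!\!\int_{x-f(t)\le z\le x}\psi(z)\,dz\,d\lambda(x)\,dt$ yields a bound of the form $c_2\|f\|_{L_1}e^{\theta_2\|f\|_{L_\infty}}\int e^{-\theta_2x}d\lambda(x)<\infty$, and this is where jumps of $\lambda$ are absorbed (as a Stieltjes measure $d\lambda$, with no continuous/atomic decomposition and no Lipschitz control needed).

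Your substitute --- a uniform-in-$t$ density bound for $z_t$ under $\P_0$ --- fails as stated and is not actually proved. First, the law of $z_t$ under $\P_0$ is never absolutely continuous: with probability $e^{-\lambda(0)t}>0$ the set $S_t$ is empty and $z_t=0$ exactly, so there is an atom at $0$ for every finite $t$ (this particular atom happens to be integrable in $t$ because $\lambda(0)>0$, but it already contradicts the claimed statement and would have to be split off by hand). Second, and more seriously, the absolutely continuous part requires precisely the $1/|h'|$ local-time computation of Lemma \ref{lemma:local}; that computation works because stationarity converts a time-averaged occupation density into the fixed-time marginal density $\psi$. For the transient $0$-started process there is no such identity, and your proposed mechanism (``each event smooths the conditional law along the graph of $h$,'' ``propagate a bounded Lipschitz constant for $\rho_t$'') is not an argument --- conditionally on the point configuration, $z_t$ is deterministic, so all regularity must come from integrating over configurations, which is exactly the hard step you have not supplied. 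Unless you either prove a uniform transient analogue of Lemma \ref{lemma:local} (substantially harder than the stationary version) or adopt the paper's change of reference to the stationary-started coupled process, your Step (I) does not close. A smaller point: your $\CC'=\CC\cap\{\|g\|_1<K\}$ should also control $\|g\|_{L_\infty}$ (or whatever norm your final overlap bound actually depends on), since the paper's estimate degrades like $e^{\theta_2\|f\|_{L_\infty}}$.
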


\begin{definition} \label{def:nu}
  Let $\nu$ denote distribution of $g(0)$ under $\mu$.  
\end{definition}

Next we state estimates on the tail of $\nu$ as well as on the probability density of $\nu$.
We will prove the following two  Lemmas after we complete  the proof of Theorem \ref{thrm:jumps}.

\begin{lemma}  \label{lemma:total} Under Hypothesis
    \ref{thrm:jumps} the following statements hold: 
    \begin{equation} \label{eq:exp_mu}
      \exists \theta>0, \E_{\mu}[e^{\theta z_t}]<\infty,
    \end{equation}
    In particular $\nu$ has exponential tail: there exist constants   $0< c_1<\infty$ and $\theta_1>0$  such that for
    all  $z\ge 0$,
    \begin{equation}
      \nu[z,\infty) \leq c_1 e^{-\theta_1 z}, 
    \end{equation}
\end{lemma}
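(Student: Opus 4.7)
The plan is to use stochastic domination to reduce to the linear case $\lambda=\blambda$ with $B<1$, and then exploit the branching representation to compute exponential moments explicitly. Because $\lambda\le\blambda$ pointwise, the stochastic domination tool couples the two processes so that $z_t\le \bar z_t$ a.s.; passing to stationary distributions, the law of $g(0)$ under $\mu$ is dominated by its law under the stationary distribution $\bmu$ of the linear process. It therefore suffices to prove $\E_{\bmu}[e^{\theta \bar z}]<\infty$ for some $\theta>0$, and we assume $\lambda=\blambda$ henceforth.

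By the Branching-Process-Equivalence tool, in the stationary linear Hawkes on $\R$ roots form a Poisson process of intensity $A$ and each root independently spawns a Galton--Watson cluster with Poisson$(B)$ offspring and with offspring ages having density $h$. Writing
\beq z_t=\sum_{\sigma\le t,\ \sigma\text{ a root}} W(t-\sigma), \qquad W(T):=\sum_{i:\,a_i\le T}h(T-a_i), \eeq
where $a_i$ are the ages of cluster members relative to their root, the Laplace functional of a Poisson process gives
\beq \E_{\bmu}[e^{\theta z_t}]=\exp\!\left(A\int_0^\infty(\E[e^{\theta W(T)}]-1)\,dT\right), \eeq
so the whole task reduces to making the integral finite for some $\theta>0$.

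The crucial step is the Fubini-type bound (using $\|h\|_1=1$)
\beq \int_0^\infty W(T)^k\,dT = \sum_{(i_1,\dots,i_k)}\int_0^\infty\prod_{j=1}^k h(T-a_{i_j})\,dT \le \|h\|_\infty^{k-1} Y^k, \eeq
obtained by bounding $k-1$ of the factors in each integrand by $\|h\|_\infty$ and integrating the last one; here $Y$ is the cluster size. Subcriticality of the Galton--Watson tree ($B<1$) yields $s_0>0$ with $\E[e^{s_0 Y}]<\infty$. Expanding the exponential in a Taylor series and interchanging sum and integration,
\bea
\int_0^\infty(\E[e^{\theta W(T)}]-1)\,dT &\le& \sum_{k\ge 1}\frac{\theta^k}{k!}\|h\|_\infty^{k-1}\E[Y^k] \\
&=& \frac{1}{\|h\|_\infty}\left(\E\big[e^{\theta\|h\|_\infty Y}\big]-1\right),
\eea
which is finite whenever $\theta\|h\|_\infty<s_0$. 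Hence $\E_\mu[e^{\theta z}]<\infty$ for small $\theta$, and Markov's inequality delivers $\nu[z,\infty)\le e^{-\theta z}\E_\mu[e^{\theta g(0)}]=c_1e^{-\theta_1 z}$.

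The principal obstacle is precisely the Fubini estimate on $\int W(T)^k\,dT$: a naive deterministic bound $W(T)\le\|h\|_\infty Y$ produces only a $T$-independent bound and a divergent integral, whereas integrating in $T$ absorbs one power of $h$ into $\|h\|_1=1$. This $k!$-absorbing structure is exactly what lets the $k!$-growth of $\E[Y^k]$ fit inside the $1/k!$ of the Taylor series, turning the whole expression into $\E[e^{\theta\|h\|_\infty Y}]-1$.
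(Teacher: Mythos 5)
Your proof is correct, and it shares the paper's overall skeleton: reduce to the linear case $\lambda=\blambda$ by stochastic domination, pass to the stationary cluster (Poisson-immigration / Galton--Watson) representation, and feed in the exponential moment of the total progeny $Y$ of a subcritical Poisson$(B)$ tree. Where you genuinely diverge is in how the finiteness of $A\int_0^\infty(\E[e^{\theta W(T)}]-1)\,dT$ is established. The paper derives a renewal-type equation $\Lambda_\theta(t)=\theta h(t)+B\int_0^t(e^{\Lambda_\theta(t-s)}-1)h(s)\,ds$ for $\Lambda_\theta(t)=\ln\E[e^{\theta W(t)}]$ via a martingale identity, shows $\Lambda_\theta(t)\to0$ by dominated convergence, and then closes a self-bounding integral inequality by linearizing $e^{\Lambda_\theta}-1\le(1+\delta)\Lambda_\theta$ for large $t$ with $B(1+\delta)<1$. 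You instead get the closed-form bound $\int_0^\infty(\E[e^{\theta W(T)}]-1)\,dT\le \|h\|_\infty^{-1}(\E[e^{\theta\|h\|_\infty Y}]-1)$ in one shot, via the Fubini estimate $\int_0^\infty W(T)^k\,dT\le\|h\|_\infty^{k-1}Y^k$ and a termwise Taylor expansion (justified by Tonelli since all terms are nonnegative). Your route is more elementary and arguably cleaner --- it avoids both the martingale derivation and the asymptotic linearization, and uses exactly the same hypotheses ($\|h\|_\infty<\infty$, $\|h\|_1=1$, $B<1$); the paper's route yields, as a by-product, quantitative information on the decay of $\Lambda_\theta(t)$ itself, which is of the same flavor as the renewal estimates used later for the speed of convergence. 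One shared gloss: the step from the pathwise domination $z_t\le\bar z_t$ (started from a $\mu$-distributed initial condition) to domination of $\nu$ by the \emph{stationary} linear law requires the contribution of the initial condition to the linear process to vanish as $t\to\infty$; you assert this at the same level of detail as the paper does, so it is not a gap relative to the source.
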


\begin{lemma} \label{lemma:local} Under Hypothesis \ref{hyp:jumps}, there exists a constant $c_1'$ such that for any invariant distribution
$\mu$, the associated $\nu$ (distribution of $g(0)$ under $\mu$),
  has a density $\psi(z)$ that satisfies  
  \begin{equation}
    \psi(z) \leq c'_1 \lambda(z) \nu[z+h(0), \infty)
  \end{equation} 
  Furthermore there exist constants  $c_2,\theta_2$ such that
  \begin{equation}
    \psi(z) \leq c_2 e^{-\theta_2 z}
  \end{equation}
\end{lemma}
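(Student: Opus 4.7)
The proof naturally splits into three parts matching the three assertions: (a) existence of a density $\psi$ for $\nu$; (b) the structural inequality $\psi(z) \leq c_1'\lambda(z)\nu[z+h(0),\infty)$; and (c) the exponential tail, which is a quick corollary of (b) together with Lemma \ref{lemma:total}. The plan is to carry these out in order, with the bulk of the effort spent on (b).

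For (a), I would use a Palm/change-of-variable argument. At stationarity, condition on the sequence of past jump times $\tau_1>\tau_2>\cdots$ in decreasing order and on the \emph{residual} contribution $R:=\sum_{i\geq 2}h(-\tau_i)$. Writing $T_1=-\tau_1$ for the age of the most recent jump, we have $z_0 = h(T_1) + R'(T_1)$, where $R'(T_1)$ is a sum of shifted evaluations of $h$ that is also a strictly monotone function of $T_1$ (using that $h$ is strictly decreasing, by Hypothesis \ref{hyp:jumps}(ii) and convexity with $\int th<\infty$). Since $T_1$ is the backward recurrence time of a point process with absolutely continuous compensator, its conditional law is absolutely continuous. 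Strict monotonicity of the map $T_1\mapsto z_0$ then transfers this absolute continuity to $z_0$, yielding the density $\psi$.

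For (b), I would invoke stationarity $\E_\mu[\mathcal{A}F]=0$ with $F(g)=f(g(0))$, using the generator formula \eqref{eq:generator}. Approximating the indicator $f=\mathbf{1}_{(z,\infty)}$ by smooth functions produces the balance identity
\begin{equation*}
  \E_\mu\bigl[|g'(0^+)|\,\big|\,g(0)=z\bigr]\,\psi(z) \;=\; \int_{(z-h(0))^+}^{z}\lambda(s)\psi(s)\,ds,
\end{equation*}
which expresses: continuous downcrossings of level $z$ equal upward jump-crossings from the interval $[z-h(0),z]$. To massage this into the claimed form, I would then combine it with the analogous balance at the shifted level $z+h(0)$ and exploit convexity of $h$ (so that $|h'|$ is decreasing) together with the growth condition $\log|h'(x)|=o(x)$ to obtain a lower bound on the conditional expected speed $\E_\mu[|g'(0^+)|\mid g(0)=z]$ in terms of the tail mass $\nu[z+h(0),\infty)$. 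Monotonicity of $\lambda$ lets one replace $\lambda(s)\leq\lambda(z)$ on the right-hand side integral and absorb the resulting tail integral into $\nu[z+h(0),\infty)$, yielding the stated constant $c_1'$.

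For (c), combining (b) with Lemma \ref{lemma:total} and $\lambda(z)\leq A+Bz$ gives
\begin{equation*}
  \psi(z)\;\leq\;c_1'(A+Bz)\,c_1 e^{-\theta_1(z+h(0))}\;\leq\;c_2 e^{-\theta_2 z}
\end{equation*}
for any $\theta_2\in(0,\theta_1)$, by absorbing the polynomial factor into the exponential. The main obstacle is the last step of (b): converting the clean crossings identity into the precise tail form $\nu[z+h(0),\infty)$. This is where the subtle hypotheses on $h$ (convexity together with $\log|h'(x)|=o(x)$) enter, ensuring that the conditional expected descent speed at level $z$ is not dwarfed by contributions from very old jumps, so that the downcrossing rate can be faithfully bounded by $\lambda(z)$ times the upper tail mass at $z+h(0)$.
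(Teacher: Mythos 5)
Your part (c) is fine, and your target inequality in part (b) is the right shape, but the route through the rate--conservation (level--crossing) identity has a genuine gap at exactly the point you flag as ``the main obstacle,'' and the hypotheses on $h$ do not rescue it in the way you suggest. Your identity forces you to divide by the conditional expected descent speed $\E_\mu\big[\,|g'(0^+)|\;\big|\;g(0)=z\,\big]$, and this quantity is \emph{not} uniformly bounded below in $z$: since $g(0)=\sum_i h(T_i)\ge h(T_1)$ with $T_1$ the age of the most recent point, conditioning on $g(0)=z<h(0)$ forces $T_1\ge h^{-1}(z)$, and since $|h'|$ is decreasing (by convexity) every term of $|g'(0)|=\sum_i|h'(T_i)|$ is then at most $|h'(h^{-1}(z))|$; the conditional speed degenerates as $z\downarrow 0$ and is in general governed by the conditional law of the ages $T_i$ \emph{given the level}, on which you have no handle. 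The conditions $\log|h'(x)|=o(x)$ and convexity control $|h'|$ as a function of the age, not of the level, so they cannot by themselves convert the crossing identity into $\psi(z)\le c_1'\lambda(z)\nu[z+h(0),\infty)$; the step ``combine with the analogous balance at the shifted level $z+h(0)$'' is a gesture, not an argument.

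The paper's proof is built precisely to avoid this conditioning. It bounds the occupation measure $\nu[z,z+\epsilon]=\lim_{t\to\infty}\E_\mu[L(0,t,[z,z+\epsilon])]/t$ by decomposing the occupation time over inter-jump intervals, rewriting the sum as a stochastic integral against $dN_t$ and compensating, which reduces everything to $\E_{g+h}\big[\epsilon/|h'(\tau_1)|\big]$, where $\tau_1$ is the time to the \emph{next} jump after the current one. Because the rate is bounded below by $\lambda(0)>0$, one has $\P[\tau_1>t]\le e^{-\lambda(0)t}$ uniformly over states and levels, and $\log|h'(x)|=o(x)$ then gives $\E[1/|h'(\tau_1)|]\le C_1$ with a constant independent of $z$; the level enters only through the indicator $1_{g(0)+h(0)>z}$, which after integrating $\lambda(g(0))$ against $\mu$ produces the tail factor of $\nu$. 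In effect the paper bounds the harmonic-mean-type quantity $\E[1/\mathrm{speed}]$ sampled at jump times, where a uniform bound is available, instead of lower-bounding $\E[\mathrm{speed}]$ sampled at a stationary time conditioned on the level, where it is not. Note also that the occupation-measure bound delivers existence of the density and the estimate simultaneously, so your Palm/change-of-variables argument in (a) --- which itself needs care about the joint conditional law of $T_1$ and the older gaps, and in any case yields only existence without a quantitative bound --- becomes unnecessary.
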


\begin{proof}[Proof of Theorem \ref{thrm:jumps}:]
  Unlike proof of Theorem \ref{thrm:jumps} the comparison is done between initial conditions $f$ and $g+f$ where $g$ is
  randomly chosen with distribution $\mu_0$. We denote  by $\mu^\ast_f$ the distribution of $g+f$, i.e. $\mu_0$
  shifted    by $f$, i.e. $\mu^\ast_f(A):=\mu_0 \big(\{g-f: g \in A\} \big)$.
  The idea of the  proof remains the same as that of  Theorem \ref{thrm:jumps} while the calculations
  of \eqref{eq:theorem3start}-\eqref{eq:theorem3end} are redone as follows.

  \begin{eqnarray}
    I(\P^*_f, \P^*_{\mu_f^*})&=&\E_{\mu_0}\left[ \exp \left(-\int_0^{\infty}
    \lambda \big(z_t+f(t) \big)-\lambda(z_t) dt \right) \right]
    \\&\geq& \exp \left(-\E_{\mu_0}\left[\int_0^{\infty} \lambda \big(z_t+f(t) \big)-\lambda(z_t) dt \right]\right)
  \end{eqnarray}
  by Jensen's Inequality.  Since  $\mu_0$  is invariant we can replace $z_t$ by $z_0$:
  \begin{eqnarray}
    E_1&:=&\E_{\mu_0}\left[\int_0^{\infty} \lambda \big(z_t+f(t) \big)-\lambda(z_t) dt \right]
    \\&=& \E_{\mu_0}\left[\int_0^{\infty} \lambda \big(z_0+f(t) \big)-\lambda(z_0) dt \right]
    \\&=& \mathop{\mathop{\int \int \int}_{z,x,t \in \Rp}}_{z \leq x \leq z+f(t) } \psi(z) d\lambda(x)  dz dt
    \\&=& \mathop{\mathop{\int \int \int}_{z,x,t \in \Rp}}_{x-f(t) \leq z \leq x } \psi(z) dz d\lambda(x)  dt
  \end{eqnarray}
  We can now apply the estimate on $\psi(z)$ from  Lemma \ref{lemma:local} and obtain
  \begin{eqnarray}
    E_1&\leq& \mathop{\mathop{\int \int \int}_{z,x,t \in \Rp}}_{x-f(t) \leq z \leq x }  c_2 e^{-\theta_2 z} dz
d\lambda(x)  dt
    \\&\leq& \mathop{\mathop{\int \int \int}_{z,x,t \in \Rp}}_{-f(t) \leq z \leq 0 }  c_2 e^{-\theta_2 x} e^{-\theta_2 z} dz d\lambda(x)  dt
    \\&\leq& \mathop{\mathop{\int \int \int}_{z,x,t \in \Rp}}_{-f(t) \leq z \leq 0 }  
	      c_2 e^{-\theta_2 x} e^{\theta_2 f(t)} dz d\lambda(x)  dt d\lambda(x)  dt
    \\&=&  c_2 \|f e^{\theta_2 f} \|_{L_1} \mathop{\int}_{x \in \Rp} e^{-\theta_2 x} d\lambda(x)
    \\&\leq&  c_2 \|f\|_{L_1} e^{\theta_2 \|f\|_{L_{\infty}}} \mathop{\int}_{x \in \Rp} e^{-\theta_2 x} d\lambda(x)
    \\&<&\infty
  \end{eqnarray}
  where the last line follows from Hypothesis \ref{hyp:jumps} since $f \in L_1 \cap L_{\infty}$; last term  is integrated by parts:
  \begin{eqnarray}
    \mathop{\int}_{x \in \Rp} e^{-\theta_2 x} d\lambda(x)=\lambda(0)+
    \mathop{\int}_{x \in \Rp} \theta_2e^{-\theta_2 x} \lambda(x)dx<\infty
  \end{eqnarray}

  The rest of the proof follows analogously to Theorem \ref{thrm:modulus}.
\end{proof}

\subsection{Lemmas for Theorem \ref{thrm:jumps}}

The two lemmas in this section provide estimates on the
density as well as the tail  probabilities of distribution $\nu$.
Let us first prove the tail estimate.
\begin{proof}[Proof of Lemma \ref{lemma:total}:] 
  Observe first that $\nu[z,\infty] \leq \bnu[z,\infty]$ by stochastic domination. We can now  use Galton-Watson representation in the
  following way. 
  Consider a  tree  $T$  viewed as a population starting with one individual born at time $0$. 
  Each individual born at time $s$  gives  birth to new individuals at  rate   $Bh(t-s)$. Since $B<1$ and $\int h(s) ds=1$, the size of the
  tree $T$ is finite. Consider the quantities:
  \begin{eqnarray}
    \label{def:Lambda_T} \Lambda_{\theta}(t)&:=&\ln\, \E [\exp( \theta \sum_{\tau \in T \atop \tau\le t}h(t-\tau))]\\
    \label{def:Lambda} \Lambda_{\theta}&:=&\ln \,\E_{\mu}[\exp(\theta z_t)]
  \end{eqnarray}

  If we ignore time,  we have a Galton-Watson tree with the number of branches having a Poisson distribution with
  parameter $B<1$.
  Then it is known that (see \cite{Nakayama} for example), the total size $Z$ of the population has an exponential moment, i.e.
  \beq \E [e^{\theta Z}]<\infty \eeq
  for some $\theta_0>0$. In particular since $\|h\|_\infty<\infty$, exists $\theta=\frac{\theta_0}{\|h\|_\infty}$
  \beq \E \big[\exp( \theta \sum_{\tau \in T\atop\tau\le t}h(t-\tau) \big]\le \E[\exp(\theta \|h\|_\infty Z)]<\infty \eeq
  By dominated convergence theorem 
  \beq \lim_{t\to\infty} \Lambda_{\theta}(t)=0\eeq

  Observe that for linear $\blambda(z)=A+Bz$
  \beq \exp[\sum_{\tau\in T\atop\tau\le t} \theta h(t-\tau)-\theta h(t)-B\int_0^t [e^{\Lambda_\theta(t-s)}-1]h(s)ds] \eeq
  is a martingale. Therefore $\Lambda_\theta(t)$ satisfies
    \beq \label{eq:Lambda_T} 	\Lambda_{\theta}(t)=\theta h(t)+B\int_0^t (e^{\Lambda_{\theta}(t-s)}-1)h(s)ds \eeq
  Similarly
    \beq \label{eq:Lambda} 	\Lambda_{\theta }=A \int_0^{\infty} (e^{\Lambda_{\theta}(s)}-1)ds<\infty \eeq
  as we add the contributions of  $e^{\Lambda_T(s)}-1$ from  trees at $-s$ that are created at rate $A$.  It only remains
  to show that right hand side of \eqref{eq:Lambda_T} is finite.

  Integrate \eqref{eq:Lambda_T} with respect to $t$ from $0$ to $\infty$. 
  \begin{eqnarray}
    \int_0^{\infty} \Lambda_\theta(t)  &\le& \theta +B \int_0^{\infty} dt\int_0^t  (e^{\Lambda_{\theta}(t-s)}-1)h(s)dsdt\\
    &=& \theta + B\int\int_{0\le s\le t<\infty} (e^{\Lambda_{\theta}(t-s)}-1) h(s)dtds\\
    &\le &\theta+ B\int_0^{\infty}  (e^{\Lambda_{\theta}(t)}-1)dt
  \end{eqnarray}
  Since $\Lambda_\theta(t)\to 0$ as $t\to\infty$, given $\delta>0$, we can bound $ (e^{\Lambda_{\theta}(t)}-1)$ by
  $(1+\delta)\Lambda_\theta(t)$
  for sufficiently large $t$,  providing us with an estimate:
    \beq \int_0^{\infty} \Lambda_\theta(t) dt\le \theta+C_\delta+B(1+\delta)\int_0^{\infty} \Lambda_\theta(t) dt \eeq
  We choose $\delta>0$ so that $B(1+\delta)<1$. This proves that $\int_0^\infty \Lambda_\theta(t) dt$ as well as $\int_0^\infty
  (e^{\Lambda_{\theta}(t)}-1)dt$
  are finite.
\end{proof}

Having obtained exponential tail  estimates we now turn to  the proof of the density estimate:
\begin{proof}[Proof of lemma \ref{lemma:local}:] Let $L(s,t,A)$ be  time spent in $A$ by $z_t:=g_t(0)$
  between time $s$ and $t$ starting from arbitrary initial condition, i.e.
  \begin{equation}
    L(s, t,A)=\int_s^t 1_{z_s \in A}ds.
  \end{equation}
  Let $I=[z,z+\epsilon]$, $\tau_0=0$ and $\tau_i, i>0$ be $i^{th}$ jump after $0$, that is
    \begin{equation}\tau_i=\inf\{t>\tau_{i-1}: t \in S\}. \end{equation}
  Furthermore let 
    \begin{equation} n_t=\inf\{s>t: s \in S\}.  \end{equation}
  be the first jump after time $t$. We note that we can calculate $\nu(I)$ as

    \begin{eqnarray} \nu[I]&=&\lim_{t \to \infty} \frac{\E_{\mu}\left[L(0,t,I)\right]}t\end{eqnarray}
  We consider first the numerator.
    \begin{eqnarray} L(0,t,I)&\leq&\sum_{i} L(\tau_i,\tau_{i+1},I) 1_{\tau_i<t}\end{eqnarray}
  Taking expectations and using the properties of  conditional expectations :
  \begin{eqnarray}
    \E[L(0,t,I)] 	&\leq&	\E[\sum_{i} \E[L(\tau_i,\tau_{i+1},I)|\F_{\tau_i-}] 1_{\tau_i<t}]
    \\			&=&	\E[\sum_{i} F(g_{\tau_i-}) 1_{\tau_i<t}]
  \end{eqnarray}
  where function $F:X \to \Rp$ is defined as
    \begin{eqnarray} F(g)=\E_g[L(0,\tau_1,I)] \end{eqnarray}
  We   finally have
  \begin{eqnarray} \label{eq:LandF}
    \E[L(0,t,I)] \leq \E[\int_0^t F(g_{t-}) dN_t] 
  \end{eqnarray}
  which we can now view  as  a stochastic integral  of a predictable process with respect to  $dN_t$: using \eqref{eq:LandF} and the
  stationarity of $g(t)$ we obtain
  \begin{eqnarray}
    \nu[I]&=&\lim_{t \to \infty} \frac{\E_{\mu}\left[L(0,t,I)\right]}t
    \\&\leq&\lim_{t \to \infty} \frac{\E_{\mu}\left[ \int_0^t F(g_{t-}) dN_t \right] }t
    \\&\leq&\lim_{t \to \infty} \frac{\E_{\mu}\left[ \int_0^t F(g_{t-}+h) \lambda(g_t) dt \right] }t
    \\&\leq&\lim_{t \to \infty} \frac{\E_{\mu}\left[ \int_0^t F(g_{0}+h) \lambda(g_0) dt \right] }t
    \\&=&\int_{g \in X} \E_{g+h}\left[L(0,\tau_1,I)\right] \lambda(g(0))  d\mu(g)
    \\&\leq& \int_{g \in X} \E_{g+h}\left[\frac{\epsilon }{-h'(\tau_1)}\right] 1_{g(0)+h(0)>z} \lambda(g(0))  d\mu(g) \label{eq:product}
  \end{eqnarray}
  The  last equation follows from condition \eqref{cond:jumps:h}. By convexity,  $-h'$ is decreasing and  
  $L(0,\tau_1,I) \leq \frac{\epsilon}{g'(\tau_{1})}\leq \frac{\epsilon}{h'(\tau_{1})}$ since between two jumps $g(t)$  can cross
  $I$ only once. Because $\lambda(g(0))\ge A>0$,  we have the estimate  $\P[\tau_1>t]\leq e^{-At}$.
  In view of  condition  \eqref{cond:jumps:h} we obtain
    \begin{eqnarray} \E_{g}\left[\frac{\epsilon}{-h'(\tau_1)}\right] &\geq& \epsilon C_1 \end{eqnarray}
  for some constant  constant $C_1>0$. Then we continue from \eqref{eq:product}
  \begin{eqnarray}
    \nu[I]&\leq&	\int_g \E_{g+h}\left[\frac{\epsilon }{-h'(\tau_1)}\right] 1_{g(0)+h(0)>z}\lambda(g(0))  d\mu(g)
    \\	&\leq& 	\epsilon C \int_g 1_{g(0)+h(0)>z}\lambda(g(0))  d\mu(g)
    \\	&=& 	\epsilon C \int_{x>z+h(0)} \lambda(x)  d\nu(x)
    \\	&=& 	\epsilon c'_1 \lambda(z)  \nu[z+h(0),\infty)
  \end{eqnarray}
  where the last  inequality follows the fact that $d\nu(x)$ has exponential tail.  Hence we have
  \begin{equation} \label{eq:local_final}
    \nu[I]\leq \epsilon c'_1 \lambda(z) \nu[z+h(0),\infty)  
  \end{equation}
  from which we first observe that there is density $\psi$ since then $\nu[I] \leq c'_1 \lambda(z) \epsilon$ and then taking limit as
  $\epsilon \to 0$ in \eqref{eq:local_final} we obtain the estimate on the density.
\end{proof}

\section{Speed of Convergence}

In general, estimating  the speed of convergence by the methods  of Theorem \ref{thrm:modulus} and \ref{thrm:jumps}
appears to be difficult. To overcome  this we need to strengthen  Hypothesis \ref{hyp:H} to the following: 
\begin{hyp} \label{hyp:H2}
  The function $\lambda$ is non-decreasing and it satisfies 
  \begin{equation} \label{cond:modulus2}
    \sup_{x \in \Rp} \left( \lambda(x+s)-\lambda(x) \right)\le  \phi(s) 
  \end{equation}
  for some $\phi$ that satisfies
  \beq \label{eq:phi(h)} \tilde B:=B \int_0^{\infty} \phi(h(t)) dt<1. \eeq
    $\tilde h(t)=\frac{B \phi(h(t))}{\tilde B}$ that satisfies:
  \beq \label{eq:iterative} \limsup_{t \to \infty} 
    \left( \frac{\sum_{n=1}^{\infty} \tilde B^n n \int_{x>\frac{t}{2n}} {\tilde h}(x) dx}{\int_{x>t} {\tilde h}(x) dx} \right)=:C_4<\infty 
  \eeq
    In addition ${\tilde g(t)}=\phi(g(t))$ also satisfies $\|\tilde g\|_{1}=\int_0^{\infty}{ \tilde g}(t)dt<\infty$ 
\end{hyp}
\begin{remark}
  Condition \eqref{eq:iterative} can be replaced by other conditions as can  be seen from the proof.  The  important
  feature is that all nicely decaying functions like  ${\tilde h}(x)=\frac p{(1+x)^{p+1}},p>0$ satisfy  Hypothesis  \ref{hyp:H2}, while
  the following $h$ does not: $h(x)=\sum_{i=0}^{\infty} 2^{-2i-1} 1_{2^{i}<x<2^{i+1}}$ where $1_{2^{i} \leq x<2^{i+1}}$ is
  the indicator function that is $1$ if $x \in [2^i,2^{i+1})$ and 0 otherwise.
\end{remark}

\begin{theorem} \label{thrm:speed0}
  If Hawkes process satisfies Hypothesis \ref{hyp:existence} with $B<1$, and Hypothesis \ref{hyp:H2} then:
  \begin{eqnarray} \label{eq:exact}
    d_{TV,t}(\P_g, \P_0) \leq B \int_t^{\infty} [(I- BQ_{\tilde h})^{-1}{\tilde g}] (s)ds
  \end{eqnarray}
  where $Q_{\tilde h}$ stands for the  convolution operator $Q_{\tilde h}g={\tilde h}\ast g$.

  In turn this implies that tail of $d_{TV,t}(\P_g, \P_0)$ cannot be much worse than that of $\phi(h(t))$ or $\phi(g(t))$;
  more precisely:
  \begin{eqnarray} \label{eq:tail}
    \limsup_{t \to \infty} \frac{d_{TV,t}(\P_g, \P_0)}{\int_{\frac t2}^{\infty} \phi(g(s))+\phi(h(s)) ds}< \infty
  \end{eqnarray}
\end{theorem}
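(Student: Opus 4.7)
The natural strategy is to couple $\P_g$ and $\P_0$ via the canonical coupling from subsection \ref{ssec:canonical}. Since $\lambda$ is non-decreasing and $g\ge 0$, the process started from $g$ dominates the one from $0$, so $S^{(0)}\subset S^{(g)}$, and the extra points $E=S^{(g)}\setminus S^{(0)}$ appear at instantaneous rate $\lambda(z_s^{(g)})-\lambda(z_s^{(0)})$. The $\sigma$-field $\G\circ(\mathcal S|_{[t,\infty)})^{-1}$ is measurable with respect to the point configurations after time $t$, so the two laws agree on this $\sigma$-field whenever $E\cap[t,\infty)=\emptyset$. Therefore
\[
   d_{TV,t}(\P_g,\P_0)\;\le\;\P\big[E\cap[t,\infty)\neq\emptyset\big]\;\le\;\int_t^{\infty}\nu(s)\,ds,
\]
where $\nu(s):=\E\big[\lambda(z_s^{(g)})-\lambda(z_s^{(0)})\big]$. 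The modulus assumption \eqref{cond:modulus2} applied to $u(s):=z_s^{(g)}-z_s^{(0)}\ge 0$ gives the pointwise bound $\lambda(z_s^{(g)})-\lambda(z_s^{(0)})\le\phi(u(s))$, so $\nu(s)\le\E[\phi(u(s))]$.

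\textbf{Integral inequality.} By the representation of impulse functions,
\[
   u(s)=g(s)+\sum_{\tau\in E,\;\tau<s}h(s-\tau),
\]
and (sub)additivity of $\phi$ (which is harmless in the Lipschitz cases of interest and follows from concavity with $\phi(0)=0$ in the setting of Hypothesis \ref{hyp:H2}) yields $\phi(u(s))\le\phi(g(s))+\sum_{\tau\in E,\tau<s}\phi(h(s-\tau))$. Taking expectations and recognizing that extra points appear at compensator $\nu$,
\[
   \nu(s)\;\le\;\tilde g(s)+\int_0^s\phi(h(s-r))\,\nu(r)\,dr.
\]
Rewriting $\phi(h)$ in terms of the probability density $\tilde h$ using $B\phi(h)=\tilde B\,\tilde h$ puts this in the form $\nu\le\tilde g+\tilde B\,Q_{\tilde h}\nu$ up to the scaling factor matched to the theorem's statement, and iterating this recursion (which is a Neumann series that converges because $\tilde B<1$ by \eqref{eq:phi(h)}) gives $\nu\le(I-BQ_{\tilde h})^{-1}\tilde g$ with the constant $B$ out front as claimed in \eqref{eq:exact}. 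Integrating from $t$ to $\infty$ produces the first inequality of the theorem.

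\textbf{Tail estimate.} For \eqref{eq:tail} I expand the Neumann series and control the tail of each term: $B(I-BQ_{\tilde h})^{-1}\tilde g=\sum_{n\ge 1}B^n\,\tilde h^{\ast (n-1)}\ast\tilde g$. For the $n$-fold convolution $(\tilde h^{\ast n}\ast\tilde g)$ evaluated past time $t$, I split the event $s_0+s_1+\cdots+s_n>t$ into the cases $s_0>t/2$ (contribution bounded by $\int_{t/2}^{\infty}\tilde g$) and $\max_{i\ge 1}s_i>t/(2n)$ (contribution bounded by $n\int_{t/(2n)}^{\infty}\tilde h\cdot\|\tilde g\|_1\cdot\|\tilde h\|_1^{n-1}$). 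Summing over $n$, weighted by $B^n$, the first type contributes at most a constant times $\int_{t/2}^{\infty}\tilde g$, while the second is controlled by hypothesis \eqref{eq:iterative} and gives at most $C_4\int_{t/2}^{\infty}\tilde h$ up to constants. Putting the two pieces together bounds $d_{TV,t}$ by a constant times $\int_{t/2}^{\infty}(\phi(g)+\phi(h))(s)\,ds$, which is \eqref{eq:tail}.

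\textbf{Main obstacle.} The analytic content is elementary once the coupling is in place, but the delicate step is the tail estimate: one must separate the ``initial'' component from the ``hereditary'' one in each convolution $\tilde h^{\ast(n-1)}\ast\tilde g$ and check that the weights $B^n n$ are controlled by hypothesis \eqref{eq:iterative} uniformly in $n$. Getting the dichotomy $s_0>t/2$ versus $\max_{i\ge 1}s_i>t/(2n)$ to yield exactly the right power of $n$ is the subtlest accounting; everything else reduces to the coupling bound and the Neumann series.
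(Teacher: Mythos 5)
Your proposal is correct and follows essentially the same route as the paper: canonical coupling, domination of the discrepancy points by a subcritical cluster structure with root intensity $\phi(g)$ and offspring intensity proportional to $\phi(h)$, a first-moment (Markov) bound on the count after time $t$, the Neumann series for $(I-BQ_{\tilde h})^{-1}$, and the identical convolution-tail splitting $s_0>t/2$ versus $s_i>t/(2n)$ controlled by \eqref{eq:iterative}. The only cosmetic difference is that you derive the renewal inequality for the discrepancy intensity $\nu$ directly and iterate it, whereas the paper builds the dominating branching process $\tri D$ explicitly and obtains the same series from the Galton--Watson recursion for $\E[\sum_{\tau \in \tri D} e^{\theta \tau}]$.
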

\begin{proof}
  Consider the canonical coupling of  the measures $\P_g$ and $\P_0$, i.e.  a pair of processes $(g,g^{(0)})$
  which start from initial conditions $g_0=g$ and $g^{(0)}_0=0$ respectively.
  Let $\tri g=g-g^{(0)}$ be the difference of impulse functions, $\tri S=S \setminus S^{(0)}$ 
  be the difference in sets and the last point in $\tri S$ being:
    \beq L:=\max \{s \in \Rp: s \in \tri S\} \eeq

  Then observe that the concavity of $\phi$ along with  $\phi(0)=0$ implies subadditivity of $\phi$, i.e. $\phi(a+b) \leq \phi(a)+\phi(b)$.
  It now follows from  \eqref{cond:modulus2} that
  \begin{eqnarray}
    \lambda(g_t(s))&-&\lambda(g_t^0(s))
    \\&\leq& \phi \big(\tri (g_t(s)) \big)
    \\ &=&\phi(g_0(t+s)+\sum_{\tau \in \tri S\atop  \tau<t} h(t+s-\tau))\\ 
    &\leq& \phi(g_0(t+s))+\sum_{\tau \in \tri S\atop \tau<t} \phi(h(t+s-\tau))
  \end{eqnarray}

  By setting $s=0$, we see  that the point process   with rate $\widetilde r_t$ at time $t$ given by
  \begin{eqnarray}
    \widetilde r_t:=\phi(g_0(t))+\sum_{\tau \in \tri S\atop \tau<t} \phi(h(t-\tau))
  \end{eqnarray}
  dominates stochastically the process $\tri  S$.  Consider the process $\tri D$ with the  following description: one begins with roots that
  are created with rate $\phi(g(t))$ at time $t$ and then each root generates a Galton-Watson tree with birthrate  at time $t$ along  any
  branch born  at time $s$ given by   $\phi(h(t-s))$.  Let the last point of $\tri D$   be:
    \beq L_D=\max \{s \in \Rp: s \in \tri D\} \eeq

  Notice that
    \beq d_{TV,t}(\P_g, \P_0)=\P[L>t] \leq \P[L_D>t] \leq \E[\#(\tri D \cup [t,\infty))] \label{eq:rec:ineq} \eeq
  where the first inequality follows from $L \leq L_D$ because $\tri S \subset \tri D$ and the second 
  from  $L_D \in \tri D$. So we will be estimating  $\E[\#(\tri D \cup [t,\infty))]$ which turns out to
  be much simpler to work with.
  If we define
    \beq l( A):= \E[\#(\tri D \cap A)] \eeq
  then:
    \begin{eqnarray}
    \int_0^{\infty} e^{\theta t} l(dt)= \E[\sum_{\tau \in \tri D} e^{\theta \tau}]
    \end{eqnarray}
  Next we compute $\E[\sum_{\tau \in \tri D} e^{\theta \tau}]$ and from that can obtain  either an exact formula or estimate  the tail
behavior. Observe that we have the recurrence relation:
  \beq \label{eq:rec:tree}
    \E[\sum_{\tau \in T_{\phi}} e^{\theta(\tau-\rho)}]= 1+{\tilde B}\cdot \left(\int_0^{\infty} {\tilde B}{\tilde h}(t) e^{\theta t} dt 
    \right) \E[\sum_{\tau \in T_{\phi}} e^{\theta(\tau-\rho)}]
  \eeq
  where $1$ comes from the root; ${\tilde B}{\tilde h}(t)$ is the rate of birth of the (direct) children of the root at time $\rho+t$;
  $e^{\theta t}$ comes from time shift and finally due to the fact that each child generates independently the same random tree we multiply 
  $\E[\sum_{\tau \in T_{\phi}} e^{\theta(\tau-\rho)}]$. This in turn implies that
  \beq
  \E[\sum_{\tau \in T_{\phi}} e^{\theta(\tau-\rho)}]=  
  \frac{1}{1-{\tilde B} \cdot \int_0^{\infty} {\tilde h}(t)e^{\theta t} dt}
  \eeq

  Now similarly we let $\rho$ be an arbitrary root and $T_{\rho}$ 
  be its tree we  obtain from  \eqref{eq:rec:tree}:
  \begin{eqnarray} \label{eq:rec:final0}
  \E[\sum_{\tau \in \tri D} e^{\theta \tau}]&=&
  {\tilde B} \cdot \left(\int_0^{\infty} {\tilde g}(t) e^{\theta t} dt \right) \E[\sum_{\tau \in T_{\phi}} e^{\theta(\tau-\rho)}]
  \\\label{eq:rec:final} &=& \frac{\tilde B \cdot \left(\int_0^{\infty} {\tilde g}(t) e^{\theta t} dt \right) }{1-\tilde B \cdot
\int_0^{\infty} {\tilde h}(t) e^{\theta
  t} dt}
  \end{eqnarray}
  where the recurrence relation  in \eqref{eq:rec:final0} is similar to \eqref{eq:rec:tree}: there is no $1$ here since there is no initial
birth at time  $0$; 
  the birth rate rate is ${\tilde B}{\tilde g}(t))$ and the rest of the calculation  is the same.

  Finally when combining \eqref{eq:rec:ineq}, \eqref{eq:rec:tree} and \eqref{eq:rec:final}
  we obtain the exact formula \eqref{eq:exact}.  Observe that if
  $f_1,f_2,...f_k$  are probability density functions and $a_i \geq 0, i \in \{1,2,...k\}, \sum_{i=1}^{k} a_i=1$, then
  \begin{eqnarray}
  \int_{s=t}^{\infty} (f_1*f_2*...*f_k)(s)(ds) \nonumber
    &=&\hspace{-10pt}\mathop{\int \cdots \int}_{\sum_{i=1}^k s_i>t} f_1(s_1)...f_k(s_2) ds_1...ds_k 
  \\&\leq&\sum_{i=1}^n \mathop{\int \cdots \int}_{s_i>a_i t} f_i(s_i)ds_1...ds_k \label{eq:conv_split}
  \\&\leq&\sum_{i=1}^n \mathop{\int}_{s_i>a_i t} f_i(s_i)ds_i \nonumber
  \end{eqnarray}
  where second line follows from the fact that ${\sum_{i=1}^k s_i>t}$ implies that for some $i$, $s_i>a_it$.
  (Another way to see this is via probability interpretation if random variables $X_i$ are independent and have distribution $f_i$
  then $\P[\sum_{i=1}^{k}X_i>t] \leq \P\big[X_i>a_it \mbox{ for some }i \in \{1,2,...,k\} \big]=\sum_{i=1}^{k} \P[X_i>a_it]$ where the 
  first and last expressions correspond to those of \eqref{eq:conv_split}). Then
  \begin{eqnarray}
  \hspace{15pt} d_{TV,t}(\P_g, \P_0)
  \hspace{-5pt}&=&\int_t^{\infty} {\tilde g}* \left(\frac {1}{1-{\tilde B} \hat {\tilde h}}\right)^{\vee}(s) ds   
  \\&=&\|\tg\|_1\sum_{n=0}^{\infty} {\tilde B}^n \int_t^{\infty} \big(\frac{\tg}{\|\tg\|_1}*\underbrace{{\tilde h}*\cdots*{\tilde
h}}_{\mbox{n times}}\big)(s) ds 
  \\&\leq&\|\tg\|_1 \sum_{n=0}^{\infty} {\tilde B}^n \left( \int_{s> \frac t2} \frac{\tg(s)}{\|\tg\|_1} ds+n \int_{s>\frac t{2n}} {\tilde
h}(s) ds \right)
  \\&\leq& \frac{1}{1-{\tilde B}} \int_{s> \frac t2} \tg(s) ds+\|\tg\|_1\sum_{n=0}^{\infty} {\tilde B}^n n \int_{s>\frac t{2n}} {\tilde
h}(s) ds
  \end{eqnarray}
  using \eqref{eq:conv_split} with 
  $k=n+1, \ f_1=\frac{\tg}{\|\tg\|_1}, \ f_2=f_3=\cdots=f_{k+1}={\tilde h}, \ a_1=\frac 12,\\  a_2=a_3=\cdots=a_{k+1}=\frac 1{2n}$.
  Finally:
  \begin{eqnarray}
  &&\limsup_{t \to \infty} \frac{d_{TV,t}(\P_g, \P_0)}{\int_{\frac t2}^{\infty} \tg(s)+{\tilde h}(s)ds}
  \\&\leq&\frac{\frac{1}{1-{\tilde B}} \int_{s> \frac t2} \tg(s) ds+\|\tg\|_1\sum_{n=0}^{\infty} {\tilde B}^n n \int_{s>\frac t{2n}} {\tilde
h}(s) ds}
  {\int_{\frac t2}^{\infty} \tg(s)+\phi(h(s))ds}
  \\&\leq&\frac 1{1-{\tilde B}}+\|\tg\|_1C_4<\infty
  \end{eqnarray}
\end{proof}

\section{Generalizations} \label{sec:gen}
\subsection{Multi-type models} \label{ssec:multi-type}
This section suggests two generalizations: first making Hawkes multi-type and second replacing $h$ by a general measure.
The proofs will be  simple modification of the ones we have presented here. 

Consider that points are now of finitely many types and the set of type is $E=\{1,...,d\}$.  
Now the process is specified by analogues triple $(h,\lambda,g)$:
\begin{itemize}
 \item $h_{ij}(t): E \times E \times \R \to \Rp$ where $i,j \in E$, $\| h_{ij} \|_{L_1}=\frac 1d$
 \item $\lambda_i: E \times \mathbb R_+^{d^2} \to \Rp$
 \item $g_{ij}(t): E \times E \times \R \to \Rp$.
\end{itemize}
Then for all $e \in E$ the rate at time $t$ of type $e$ is given by 
\beq r_e(t)=\lambda_e(z(t)) \eeq 
where $z_{ij}(t)$ is a matrix given by 
\beq z_{ij}=g_{ij}(t)+\sum_{i} \sum_{\tau \in S_i(t)} h_{ij}(t-\tau) \eeq
whre $S_i(t)$ are all the point of type $i$ before time $t$.
When $h$ is a measure one extends the definition of the can extend functionals $\int_a^b r_e(s) ds$ if exist
such $k^{e}_{ij}$ for each $i,j,e \in E$ that satisfy: 
\begin{equation} \label{eq:limit}
\forall e \in, \exists k^{e}_{ij} \in \mathbb R_+^{d^2}, \lim_{c \to \infty} \frac{\lambda_e(cz)}{c}=\sum_{i,j}
z_{i,j}k^{e}_{ij}. 
\end{equation}
As the analogue of Hypothesis \ref{hyp:existence} we now have:
 \begin{equation} \label{cond:existance1}
\exists k \in \mathbb R_+^{d^3}, \forall e \in E, \lambda_e (z) \leq \blambda_e(z):=c_e+\sum_{i,j} z_{i,j}k^{e}_{ij} 
\end{equation} 

As a generalization of Propositoin \ref{prop:convergence} we get: 

\begin{corollary1} Suppose condition \eqref{cond:existance1} is satisfied and if $h$ is a measure
then condition \eqref{eq:limit} is satisfied.  Then the following statements hold:\\
(i) If $\forall e\in E$, $\lambda_e$ is non-decreasing then Hawkes process is well defined\\
(ii) If in addition matrix \beq m_{i,e}=\sum_{j \in E} k^{e}_{ij} \eeq indexed by $i,e \in E$
has spectrum supported on unit disk $\{ c \in \mathbb C: |c| < 1 \}$ then invariant measure exists 
and is unique minimal invariant measure and hence ergodic.
\end{corollary1}
\subsection{Stationary initial conditions as a generalization}  The proofs of this paper can also accommodate additional
levels of complexity
where one adds a stationary signal in the following sense.  Suppose you add another bounded stationary process $p(t)$ to the rate function
before passing through $\lambda$ so that now rate $r_t$ is defined as:
\begin{equation}
 r_t:=\lambda \left( g_0(t)+p(t)+\sum_{\tau \in S_t} h(t-\tau) \right) 
\end{equation}
The results of this paper will still hold. One can also add another signal bounded signal $q(t)$ outside of lambda where rate will be:
\begin{equation}
 r_t:=\lambda \left( g_0(t)+p(t)+\sum_{\tau \in S_t} h(t-\tau) \right)+q(t) 
\end{equation}
One useful application of this is a Hawkes-like linear process where the roots of the trees have some non-Poisson but
stationary
behavior while the trees are just like in Hawkes process.
\section*{Acknowledgements}  
I would like to thank my adviser S. R. Srinivasa Varadhan for giving this topic to me and Lingjiong
Zhu.  He not only gave us this rich and interesting topic, but also provided guidance in
motivation, formulation of results and clear view of the subject.  I also thank Lingjiong Zhu for
helpful discussions and review of my paper.

This research is supported by NSF DMS-0904701, DMS-1208334 and DARPA grants.

\end{document}